\newcommand{\BB}{{\cal B}}
\newcommand{\EE}{{\cal E}}
\newcommand{\FF}{{\cal F}}
\newcommand{\HH}{{\cal H}}
\newcommand{\TT}{{\cal T}}
\newcommand{\MM}{{\cal M}}
\newcommand{\BR}{{\mathbb R}}
\newcommand{\BX}{{\mathbb X}}
\newcommand{\esssup}{\mathop{\mathrm{ess\,sup}}}
\newtheorem{theorem}{\bf Theorem}[section]
\newtheorem{proposition}[theorem]{\bf Proposition}
\newtheorem{lemma}[theorem]{\bf Lemma}%[subsection]
\newtheorem{corollary}[theorem]{\bf Corollary}
\theoremstyle{definition}
\newtheorem{definition}[theorem]{Definition}
\newtheorem{example}[theorem]{\bf Example}
\newtheorem{remark}[theorem]{Remark}
\numberwithin{equation}{section}
\begin{document}

\title {Schr\"odinger equations with  smooth measure potential and general measure data}
\author {Tomasz Klimsiak\\
{\small Faculty of Mathematics and Computer Science,
Nicolaus Copernicus University} \\
{\small  Chopina 12/18, 87--100 Toru\'n, Poland}\\
{\small E-mail address: tomas@mat.umk.pl}}
\date{}
\maketitle
\begin{abstract}
We study  equations driven by Schr\"odinger operators consisting of  a self-adjoint Dirichlet operator
and  a  singular potential, which belongs to a class of  positive Borel measures
absolutely continuous with respect to a capacity generated by the operator. In particular,
we cover positive potentials exploding on a set of capacity zero. 
The right-hand side of equations is allowed to be   a general bounded Borel measure. 
The class  of self-adjoint Dirichlet operators is quite large. Examples include  integro-differential operators
with the local part of divergence form.  We give a necessary and sufficient condition for the
existence of a solution, and prove some regularity and stability results.
\end{abstract}

\footnotetext{{\em Mathematics Subject Classification:}
Primary  35J10, 60J45; Secondary 35B25, 35J08, 31C25, 47G20.}

\footnotetext{{\em Keywords:} Schr\"odinger operator, smooth measure, singular potential, Dirichlet form, Markov process, Green function,  additive functional.
}

%\footnotetext{This work was supported by Polish National Science Centre
%(grant no. 2016/23/B/ST1/01543).}

\section{Introduction}

Let $E$ be a locally compact separable  metric space and $m$ be a
Radon measure on $E$ with full support. Let $A$ be a non-positive
self-adjoint operator on $L^2(E;m)$ generating a Markov semigroup
of contractions  $(T_t)_{t\ge 0}$ on $L^2(E;m)$ (so-called Dirichlet operators). We also assume that 
  that there exists the Green function $G$ for  $-A$
(see Section \ref{sec2.2}). In the paper, we give a necessary and
sufficient condition for the existence of a solution to the
following Schr\"odinger equation
\begin{equation}
\label{eq1.1}
-Au+u\cdot\nu=\mu.
\end{equation}
Here $\nu$ belongs to  the set $S_A$ consisting  of  positive {\em smooth measures}: Borel measures  absolutely continuous with respect to a capacity $Cap_A$
generated by $A$ for which there exists a strictly positive quasi-continuous function $\eta$ such that $\int_E\eta\,d\nu<\infty$.
%(with respect to $A$; see Section \ref{sec2.1}), and $\mu$ is a bounded Borel measure on $E$. 
The class of Dirichlet operators   is quite wide. In the important  case of $E= \BR^d$,  it includes densely defined operators $A$  of the form
\begin{equation}
\label{eq1.1abab}
Au(x)=\sum_{i,j=1}^d(a_{i,j}(x)u_{x_i})_{x_j}+P.V. \int_{\BR^d\setminus\{0\}} (u(x)-u(y))N(x,dy)+u(x)c(x),
\end{equation}
with   an elliptic matrix $a$, symmetric kernel  $N$, and $c\ge 0$ (see \cite[Theorem 3.2.3]{FOT}). 
%(if we replace $a,j,c$ by Radon measures, we get full characterization
%of self-adjoint Dirichlet  operators $A$ on $\BR^d$ with $C^1_c(\BR^d)\subset D(\sqrt {-A})$, see \cite[Theorem 3.2.3]{FOT}). In particular, the class of operators we consider here includes local operators (the model example is the uniformly elliptic divergence form operator) as well as non-local operators (the model example is the fractional Laplacian $\Delta^{\alpha/2}$ with $\alpha\in (0,2)$).

Since the seventies the Schr\"odinger operators with singular potentials have attracted growing interest in the literature (see e.g. \cite{AEG,AS,ABR,Eposito} and references therein).
The very important, among others, class of singular potentials appearing in applications, including, as particular case, Coulomb potentials
and harmonic potentials, is the family of  repulsive potentials  of the form  
%The class of smooth measure perturbations (depending on $A$) is very wide and  covers very important in applications Schr\"odinger operators.
%For instance, for  the fractional Laplacian $A=\Delta^{\alpha/2}$, $\alpha\in (0,2)$), the class of smooth measures includes 
\begin{equation}
\label{eq1.1a}
\nu_1(dx)=\sum_{j=1}^{N}\frac{c_j}{|x-x_j|^{\beta_j}}\,dx, \quad \nu_2(dx)=\frac{c_1}{\delta_D^{\beta_1}(x)}\,dx
\end{equation}
with any $c_j\ge 0$, $\beta_j\in\BR$, $x_j\in\BR^d$, $j=1,\dots,N$. Here and in what follows $\delta_D(x)=\mbox{dist}(x,\partial D)$. Another wide class, disjoint from  the one above, of singular potentials is the class  of {\em generalized  potentials},  i.e.  Borel measures $\nu$ 
concentrated on some  $m$-measure zero  set  $N\subset E$, e.g.   in case $A$ is the  fractional Laplacian $\Delta^{\alpha/2}$, i.e. the operator of the form \eqref{eq1.1abab}
with $a_{i,j}\equiv c\equiv 0$ and $N(x,dy)=|x-y|^{-d-\alpha}$ for some $\alpha\in (0,2)$,  any  $\sigma$-finite positive Borel measure $\nu$ satisfying
\[
\nu(dx)\ll \mathcal H^\lambda
\]
for some $\lambda\in (d-\alpha,d)$, where $\HH^\lambda$ is the Hausdorff measure of order $\lambda$, falls within the class of generalized potentials. 
These types of  potentials have  been considered in variety of models in  nuclear physics, solid-state physics and quantum field theory (see e.g. \cite{ABR}). 
The class $S_A$ of smooth measures  includes, as  particular cases, the above mentioned types of potentials. Although the class $S_A$ depends on $A$,
we always have the inclusion
\[
L^{1,+}_{loc}(E;m)\subset S_A.
\]
%This class is a far reaching extensions of the set of measures having locally integrable density with respect to $m$.
Note that smooth measure need not be  a Radon measure. In fact, it can be a  nowhere Radon measure. As an example of such a measure can serve $\nu_1$ defined by (\ref{eq1.1a}) with $N=\infty$ and suitably chosen  $\{c_j\}$ , $\{\beta_j\}$ and $\{x_j\}$ (see \cite{AM1}).

%there is a wide class of Dirichlet operators
%for which one can formulate  properly the definition of solution to (\ref{eq1.1}) and get existence and uniqueness result for (\ref{eq1.1})
%as well as approximation  of its solution by  the classical one.

%The last class of operators is recently intensively
%studied in the literature because of its  application in many
%physical and biological models.

One can look at  (\ref{eq1.1}) from two different perspectives.  In
the first one, we regard   (\ref{eq1.1}) as equation of the form
\begin{equation}
\label{eq1.2}
-A_\nu v=\mu,
\end{equation}
where $-A_\nu$ is a non-negative self-adjoint operator on $L^2(E;m)$
being the perturbation of $-A$ by the smooth measure potential
$\nu$, that is $-A_{\nu}=-A+\nu$.  In the second one, we regard
(\ref{eq1.1}) as the equation
\begin{equation}
\label{eq1.3}
-A u=-u\cdot\nu+\mu
\end{equation}
with absorption term on the right-hand side. The difference
between (\ref{eq1.2}) and (\ref{eq1.3})  is very subtle and
appears only in the case where the concentrated part $\mu_c$ of the
measure $\mu$, i.e. the singular part of $\mu$ with respect to
the capacity associated with $A$ is non-trivial.
The main goal of the paper is to study (\ref{eq1.1}) from the above
two perspectives. First, we   provide definitions of solutions to
(\ref{eq1.2}) and (\ref{eq1.3}). The problem of proper definitions
of solutions is rather delicate and requires using
some deep results  from the  potential theory.
We then give some  necessary and
sufficient condition for the existence of solutions
to (\ref{eq1.2}) and  to (\ref{eq1.3}), and we compare
the two approaches to (\ref{eq1.1}).
Finally, we give some results on regularity
and stability of solutions.

%The goal of the paper is to give the
%definitions of solution to (\ref{eq1.1}) from perspective of
%(\ref{eq1.2}) and (\ref{eq1.3}), and give  a necessary and
%sufficient condition for existence of (\ref{eq1.1}) using the
%approach (\ref{eq1.2}) and (\ref{eq1.3}). We also compare the two
%mentioned approaches and analyze properties of solutions.

% lies in the fact the we consider equations on different spaces; $L^2(E;\nu)$ and $L^2(E;m)$, respectively, and in the second case we have an absorption term on the right-hand side which forces additional integrability on the term $u\cdot\nu$
%(let us note that  smooth measure $\nu$ may be nowhere Radon).

In the paper, a solution $v$ to (\ref{eq1.2}) will be called a {\em duality solution} to (\ref{eq1.1}),
and a solution $u$ to (\ref{eq1.3}) will be called a {\em strong duality solution} to (\ref{eq1.1}).
Heuristically,
\begin{equation}
\label{eq1.s3}
v=R^{\nu}\mu,\qquad u=R(-u\cdot\nu+\mu),
\end{equation}
where
\[
R^\nu=(-A_\nu)^{-1},\qquad R=(-A)^{-1}.
\]
Note that both operators $R^\nu$ and $R$
are well-defined on $L^{\infty,+}(E;m)\cap L^2(E;m)$ by
\[
R^\nu\eta=\esssup_{\alpha>0}R^\nu_\alpha\eta,\qquad R\eta=\esssup_{\alpha>0} R_\alpha\eta,\quad \eta\in L^{\infty,+}(E;m)\cap L^2(E;m),
\]
where $(R^{\nu}_\alpha)_{\alpha>0}$, $(R_\alpha)_{\alpha>0}$ are the resolvents of $A_{\nu}$ and $A$, respectively (see, e.g., \cite{AM,Stollmann}).
Since the operators $R^\nu$ and  $R$ are linear and positive definite, we may extend them to $L^{\infty,+}(E;m)$ (with possibly infinite values).
For these extensions, we have $R^{\nu}\eta\le R\eta, m$-a.e. $\eta\in L^{\infty,+}(E;m)$.
Since the  operators $R^{\nu}, R$ are not defined on the space of measures, the idea is to understand (\ref{eq1.s3}) in the duality sense, i.e. we require that
\begin{equation}
\label{eq1.s4}
\int_E v\eta\,dm=\int_E R^\nu\eta \,d\mu,\quad\quad\quad \int_E u\eta=-\int_E uR \eta \,d\nu+\int_E R\eta\,d\mu
\end{equation}
for every $\eta\in L^\infty(E;m)$ such that $R|\eta|$ is bounded $m$-a.e. In the second equation, we additionally require that  $u\in L^1(E;\nu)$.
Although this idea is simple and natural, its implementation is complicated
by the fact that  $\mu, \nu$ are measures, and what is more, $\mu$ is an arbitrary bounded Borel measure. For this reason (\ref{eq1.s4}) is meaningful only if the operators $R^{\nu}$ and $R$ are defined
pointwise, i.e.
the functions $R^{\nu}\eta$ and $R\eta$ are well defined at every point of $E$ for every positive $\eta\in\BB(E)$.
We  can define $R$ pointwise  by using the  Green function $G$ for $-A$. Namely, we put
\begin{equation}
\label{eq1.s46s}
R\eta(x)=\int_E G(x,y)\,\eta(dy),\quad x\in E.
\end{equation}
Unfortunately, in general, there is no Green function for $-A_\nu$. One of the results of the paper consists in finding
a natural pointwise meaning for $R^\nu$. We propose such a version and denote by $\check R^\nu$. In the case of uniformly elliptic divergence form operator
\begin{equation}
\label{eq1.7} A= \sum^d_{i,j=1}\frac{\partial}{\partial
x_j}\Big(a_{ij}\frac{\partial}{\partial x_i}\Big)
\end{equation}
Malusa and Orsina \cite{MO} used the notion of Lebesgue's points to  define  the following version of the resolvent $R^\nu$:
\begin{equation}
\label{eq4.2ii}
\check R^\nu\eta(x)=\lim_{r\rightarrow 0^+} \frac{1}{|B(x,r)|}\int_{B(x,r)}R^\nu\eta(y)\,m(dy).
\end{equation}
Unfortunately,  this recipe for pointwise version of $R^{\nu}$ can be used only for a subclass of  operators whose harmonic functions are characterized  by the mean value property (or are  comparable, via Green function, with operators  for which the mean value characterization of harmonic functions holds; see \cite[Lemma 4.10]{MO}).
We propose a completely new approach based on
the probabilistic potential theory. Our approach considers that the key role
is played by the set
%to give a proper pointwise meaning for operator $\check R^\nu$ and  in understanding behaviour of solutions to (\ref{eq1.2}) and (\ref{eq1.3}) is the following set associated with smooth measure $\nu$:
\[
E_\nu=\{x\in E: \int_{V_x}G(x,y)\,\nu(dy)<\infty\mbox{ for some finely open neighborhood $V_x$ of } x\}
\]
considered  in the case of equations with operator (\ref{eq1.7}) in \cite{BDM,Sturm}. Write $N_\nu:=E\setminus E_\nu$ and recall
that the fine topology is the smallest topology under
which all excessive functions are continuous. Let $\BX$
%\[
%\mathbb X=\big((X_t)_{t\ge0},(P_x)_{x\in E\cup \{\Delta\}},\mathbb F=(\FF_t)_{t\ge0}, %(\theta_t)_{t\ge 0}\big)
%\]
be a Hunt process with life time $\zeta$ associated with the operator $A$.
The first  result of the paper (see Section 3), although rather technical, plays a pivotal rule in our approach. It  states that for every positive smooth measure $\nu$ there exists a   unique positive continuous additive functional $A^\nu$ (PCAF) of $\mathbb X$  with  exceptional set $N=N_\nu$  such that for all $x\in E_\nu$ and $\eta\in\BB^+(E)$,
\begin{equation}
\label{eq4.1ii}
E_x\int_0^\zeta\eta(X_r)\,dA^\nu_r=\int_E G(x,y)\eta(y)\,\nu(dy),
\end{equation}
for every $x\in N_\nu$,
\begin{equation}
\label{eq4.1ii.90}
P_x(A^\nu_t=\infty,\, t>0)=1,
\end{equation}
and $\psi_{A^\nu}$ defined as
\[
\psi_{A^\nu}(x)=E_x \int_0^\zeta e^{-t} e^{-A^\nu_t}\,dt, \quad x\in E,
\]
is finely continuous on $E$. Moreover, we prove that $N_\nu$ is the minimal exceptional set, in the sense that, if there exists a PCAF $A$ of $\mathbb X$ with exceptional set $N\subset N_\nu$ such that (\ref{eq4.1ii}) holds and
$\psi_A(x)=E_x \int_0^\zeta e^{-t} e^{-A_t}\,dt$ is finely continuous on $E$, then $N=N_\nu$ and
$P_x(A_t=A^\nu_t,\, t>0)=1,\, x\in E$. The above result was proved by Baxter, Dal Maso and Mosco \cite{BDM} in the case of Brownian motion (see also \cite{Sturm}). Although this result is purely probabilistic in nature, it plays key role in defining duality solutions to (\ref{eq1.1}).  We put
\begin{equation}
\label{eq4.3ii}
\check R^\nu\eta(x)=E_x\int_0^\zeta e^{-A^\nu_t} \eta(X_t)\,dt,\quad x\in E,
\end{equation}
and  show that this formula agrees with (\ref{eq4.2ii}) in the case where $A$ is defined by  (\ref{eq1.7}).  With this notion in hand, in Section \ref{sec4.1} we introduce the definition of a duality solution to (\ref{eq1.1}) by using the first formula in (\ref{eq1.s4}) with $R^\nu\eta$ replaced by $\check R^\nu\eta$. We then show that for every bounded Borel measure $\mu$ on $E$ there exists a unique duality solution to (\ref{eq1.1}).

It is worth noting here, that the  above-mentioned  approach to (\ref{eq1.1}) goes  back
to Stampacchia \cite{Stampacchia}, where equations with  measure
data and  operator (\ref{eq1.7}) defined on a bounded regular
domain $D\subset \BR^d$ are considered.  In \cite{Stampacchia},
the potential measure $\nu$ is of the form $\nu(dx)=V(x)\,dx$, where
$V\in L^p(D;m)$ with $p>d/2$. Under this assumption, there exists the
Green function for $-A_\nu$, so $\check R^\nu$ can be defined
through  its Green function.

As we mentioned earlier formula (\ref{eq4.3ii}) gives a natural  pointwise
meaning for the resolvent $R^\nu$.   In Section \ref{sec4.2}, we spend some
time to explain why we use here the term ``natural". First, we
show that if $\nu$ is a positive smooth measure such that there
exists the Green function $G^\nu$ for the operator $-A_\nu$
($\nu$ is then called a {\em strictly smooth measure}), then
\[
\check R^\nu \eta(x) =\int_E G^\nu(x,y)\eta(y)\,m(dy),\quad x\in
E.
\]
Moreover, for every sequence  $\{\nu_n\}$ of  positive strictly
smooth measures such that $\nu_n\nearrow \nu$,
\[
\check R^\nu\eta(x)=\lim_{n\rightarrow \infty}
\int_E G^{\nu_n}(x,y)\eta(y)\,m(dy),\quad x\in E.
\]
Then we show that if  $\mu$ is additionally continuous functional
on the extended domain $D_e(\EE)$, i.e. $\mu\in D'_e(\EE)$ (with
the inner product $\EE(\cdot,\cdot)$), then the duality solution $v$
to (\ref{eq1.1})  is the  unique minimizer of the energy
\[
E(\eta)=\frac{1}{2}\EE(\eta,\eta)+\frac12\int_E|\tilde \eta|^2\,d\nu
-\langle\mu, \eta\rangle_{D'_e(\EE), D_e(\EE)},\quad \eta\in
D_e(\EE)\cap L^2(E;\nu).
\]
Here $\tilde\eta$ stands for the quasi-continuous $m$-version of
$\eta$. In other words, $v$ is a variational solution to
(\ref{eq1.1}). Moreover, we show that for every bounded Borel
measure $\mu$, there exists a sequence $\{\mu_n\}$ of bounded
Borel measures in $D_e'(\EE)$ such that $\mu_n\rightarrow \mu$
narrowly and $v_n\rightarrow v$, where $v_n$ is a
variational solution to (\ref{eq1.1}) with $\mu$ replaced by
$\mu_n$. This stability property of $v$ is   sometimes used in the
literature as the definition of the so-called {\em SOLA solution} (see e.g. \cite{GV}).

Let us note here that variational approach to (\ref{eq1.1}) with
the operator (\ref{eq1.7}) on a bounded regular domain $D\subset
\BR^d$ and $\mu\in H^{-1}(D)$ was applied in Dal Maso and Mosco
\cite{DM1,DM} in the context of the so called relaxed Dirichlet
problem. In \cite[Example 3.10]{DM1} it is observed that, in
general, a variational solution to (\ref{eq1.1}) is not a
distributional solution to (\ref{eq1.1}). It is also worth
mentioning that in \cite{DM1,DM} the authors considered even more general class of
perturbations $\nu$ which do not satisfy quasi-finiteness condition (see condition (b) in Section \ref{sec2.1}).

In Section 4.3, we  prove basic regularity properties of a duality
solution $v$ to (\ref{eq1.1}).  We show that $v$ possesses an
$m$-version $\tilde v$ which is quasi-continuous and  $\tilde v\in
L^1(E;\nu)$.
% and
%\begin{equation}
%\label{eq4.4ii}
%\check v(x)=0,\quad x\in N_\nu.
%\end{equation}
Moreover,  we show that for every $k\ge 0$, $T_k(v):=
\min\{k,\max\{v,-k\}\}\in D_e(\EE)$ and
\[
\EE(T_k(v),T_k(v))\le k\|\mu\|_{TV}.
\]

Section 5 is devoted to strong duality solutions  to
(\ref{eq1.1}). By a solution we mean a quasi-continuous function
$u$ on $E$ such that  $u\in L^1(E;\nu)$ and the second equation in
(\ref{eq1.s4}) holds with $R\eta$ defined pointwise by
(\ref{eq1.s46s}). To understand the subtle difference between the
notion of duality and strong duality solution to (\ref{eq1.1}),
we have to take a closer look at the  formulations of both
definitions (see (\ref{eq1.s4})). Observe that in the case of
duality solutions, we consider some class of test functions
included in the range of the operator $\check R^\nu$, and in the
case of strong duality solution, we consider possibly wider class
of test functions included in
the range of operator $R$ (see Proposition \ref{eq4.prop.s3}).
In the first case, by
(\ref{eq4.1ii.90}) and (\ref{eq4.3ii}), each test function equals
zero on the set $N_\nu$. Hence,  for every $x\in N_\nu$, the
function $v\equiv 0$ is a duality solution to
\[
-Av+v\cdot\nu=\delta_{\{x\}}.
\]
However, this function is not a strong duality solution to the
above equation as in such a case, by the
definition, we would have
\[
0=\int_E R\eta\,d\delta_{\{x\}}=R\eta(x)=\int_E G(x,y)\eta(y)\,m(dy).
\]
This implies that $G(x,\cdot)=0$, which contradicts the
definition of the Green function. We prove that there exists a
strong duality solution $u$ to (\ref{eq1.1}) if and only if
\[
|\mu_c|(N_\nu)=0,
\]
and in this case $u$ is also a duality solution to (\ref{eq1.1}).
Recently, this result was proved by Orsina and Ponce \cite{OP} in
the case where $A=\Delta_{|D}$ and $\nu(dx)=V(x)\,dx$, and by G\'omez-Castro and   Vazquez  \cite{GV} 
in case $A=\Delta^{\alpha/2}$, $\nu(dx)=V(x)\,dx$ with $V$ belonging to the class (\ref{eq1.pscvr1}) defined below.

An existence result for strong duality solutions to (\ref{eq1.1})
is a direct consequence of the following, interesting in its own
right, result which we prove in Section 5. It states that if $v$
is a duality solution to (\ref{eq1.1}), then it is a strong
duality solution to
\[
-Av+v\cdot\nu=\mu_{\lfloor E_\nu}.
\]
We have already mentioned that  $v$ is a limit of  variational solutions to the
Schr\"odinger equations
\begin{equation}
\label{eq1.rmo1}
-Av_n+v_n\nu=\mu_n
\end{equation}
with more regular than $\mu$ measures $\mu_n$ approximating $\mu$
in the narrow topology.
This means that when passing to the limit  in (\ref{eq1.rmo1}) some reduction of the measure $\mu$ occurs.
This  phenomenon is somewhat reminiscent of the phenomena occurring
in the theory of reduced measures introduced by Brezis,
Marcus and  Ponce \cite{BMP,BMP1} for the Dirichlet Laplacian
and next generalized by Klimsiak  \cite{K:CVPDE} to a wide class
of Dirichlet operators. In our context, the  measure
$\mu_{\lfloor E_\nu}$ may be considered as a reduced measure for
$\mu$.  The same reduction takes place when we approximate
monotonically the measure $\nu$ (see Proposition \ref{prop4.s1s}).
This result is a far-reaching  generalization of  \cite[Theorem 8.2]{GV}.

In Section \ref{sec6} we briefly comment on the easy extension of our existence results
to the weighted  measure spaces $\MM_\rho$ consisting of Borel measures $\mu$ with
\[
\int_E\rho\,d|\mu|<\infty,
\]
where $\rho$ is a strictly positive excessive function, e.g. the principle eigenfunction for $-A$.
In particular, we cover the class $\MM_{\delta_D^{\alpha/2}}$  considered in \cite{DGV} for $C^{1,1}$ open domain $D\subset \BR^d$ and 
$A=(\Delta^{\alpha/2})_{|D}$, i.e. Dirichlet fractional Laplacian with zero exterior condition. It is well know that for $C^{1,1}$ domains
$\delta_D^{\alpha/2}$ is comparable to the principle eigenfunction of $(\Delta^{\alpha/2})_{|D}$. We close the section with comments
on the notion of  renormalized solutions and its relation to the notion of duality solutions.

Self-adjoint Schr\"odinger operators with smooth measure potentials (also called generalized Schr\"odinger operators) and their applications  to quantum theory
were intensively studied  in  the late '70s and '80s of the last century
by using  methods of Dirichlet forms, probabilistic potential theory and harmonic spaces, see  the papers by Albeverio, Ma and R\"ockner \cite{ABR,AM,AM1} and the paper by Boukricha, Hansen and Hueber \cite{BHH} for a nice account of results in this direction. At the same time, Baxter, Dal Maso and Mosco \cite{BDM,DM1,DM} studied equations of the form (\ref{eq1.1}) with the classical Laplacian and $\mu\in H^{-1}(D)$ in the context of the so-called relaxed Dirichlet problem with even more general class of potentials, which
do not satisfy the quasi-finiteness condition which is required in the definition of a smooth measure. 
In '90s and 2000s,  equation (\ref{eq1.1}) with smooth both $\nu$ and $\mu$ was studied by Getoor \cite{G1,G2,G3}  and Beznea and Boboc \cite{BB}
with more general class of operators $A$ generated by  right Markov semigroups.

Recently, Orsina and Ponce \cite{OP} and Ponce and Wilmet \cite{PW}
considered Schr\"odinger equations of the form (\ref{eq1.1}) with $A$ being the classical Laplacian, 
$\nu(dx)=V(x)\,dx$ for some positive Borel measurable $V$, and $\mu$ being a general bounded Borel measure. 
As to the non-local Schr\"odinger operators,  Diaz, G\'omez-Castro and   Vazquez  proved in \cite{DGV} existence results
for (\ref{eq1.1}) with $A=\Delta^{\alpha/2}$, $\nu(dx)=V(x)\,dx,\, V\in L^{1,+}_{loc}(D;m)$, and $\mu\in L^1(D; \delta^\alpha_D\cdot m)$.
Next,  G\'omez-Castro and   Vazquez in \cite{GV}  generalized results of \cite{DGV} by   considering   more relaxed class of potentials:
\begin{equation}
\label{eq1.pscvr1}
L^{1,+}_{loc}(D)+ \big\{g\in \BB^+(D): \exists_{\mbox{finite}\, Y\subset D};\, g\in L^\infty(D\setminus \bigcup_{x\in Y} B(x,r))\,\,\mbox{for any}\,\, r>0\big\}.
\end{equation}
They also considered  bounded Borel measures $\mu$ in (\ref{eq1.1}). It is an elementary  calculation that  for  a potential $V$ belonging to \eqref{eq1.pscvr1},
we have  $V(x)\,dx\in S_A$.
However, the general case, which we are interested in the paper,  i.e. equations of the form (\ref{eq1.1}) with smooth measure $\nu$ and general
(not necessarily smooth) measure $\mu$ were until now considered only  by Malusa and Orsina  \cite{MO} in the case where $A$ is a uniformly elliptic divergence form operator,
and under additional condition that $\mu$ has compact support.

The  main goal of the present paper is twofold. First, we give a necessary and
sufficient condition for the existence of a solution to the
 Schr\"odinger equation \eqref{eq1.1} in our general framework, thus we recover and give a far-reaching generalization of  the existence results of \cite{DGV,GV,MO,OP,PW}.
 Secondly, we provide a unified method, based on the probabilistic potential theory, to investigate Schr\"odinger equations of type   (\ref{eq1.1}) with general measure data.
 Of course, probabilistic methods based on Feynman-Kac formula have been applied successfully to Schr\"odinger equations in the past (see e.g. \cite{AS}),
 however in vast majority of the papers $\mu$ in (\ref{eq1.1}) was assumed to be a smooth measure. 
 This is due to the fact that only smooth measures are in {\em Revuz duality} with {\em positive additive functionals}, and this duality  is crucial for the Feynman-Kac representation
 formulas.

%We see that once again the notion of the set $N_\nu$ plays
%a crucial role in  the existence result to Schr\"odinger equations.

\section{Preliminary results}
\label{sec2}

In this section, we  make standing assumptions on the Dirichlet
operator and the associated Dirichlet form considered in the
paper. For the convenience of the reader, we also recall some basic
facts from the potential theory and the probabilistic potential
theory.

\subsection{Dirichlet forms and potential theory}
\label{sec2.1}

In the whole paper, we assume that $(A,D(A))$ is a non-positive
self-adjoint
operator on $L^2(E;m)$ generating a strongly continuous Markov
semigroup of contractions $(T_t)_{t\ge 0}$ on $L^2(E;m)$. It is well known (see
\cite[Section 1]{FOT}) that there exists a unique symmetric
Dirichlet form $(\EE, D(\EE))$ on $L^2(E;m)$ such that
$D(A)\subset D(\EE)$ and
\[
\EE(u,v)=(-Au,v),\quad u\in D(A), v\in D(\EE).
\]
We denote by $(J_\alpha)_{\alpha>0}$ the resolvent generated by
$A$. We assume that $(\EE,D(\EE))$ is {\em transient} and {\em regular}, i.e.
there exists a strictly positive bounded function $g$ on $E$ such
that
\[
\int_E|u|g\,dm\le \sqrt{\EE(u,u)},\quad u\in D(\EE),
\]
and $D(\EE)\cap C_c(E)$ is dense in $C_c(E)$ and in $D(\EE)$  with
natural  topologies on these spaces. Since $\EE$ is transient,
there exists an extension $D_e(\EE)\subset L^1(E;\,g\cdot m)$ of
the domain $D(\EE)$ such that the pair $(\EE,D_e(\EE))$ is a
Hilbert space. For an open $U\subset E$ we set
\[
\mbox{Cap}(U)=\inf\{\EE(u,u): u\ge\mathbf{1}_U\mbox{ $m$-a.e.},\,
u\in D(\EE)\}.
\]
and then, for arbitrary $B\subset E$, we set $\mbox{Cap}(B)=\inf
\mbox{Cap}(U)$, where the infimum is taken over all open subsets
of $E$ such that $B\subset U$. We say that a property $P$ holds
$(\EE)$-q.e. if it holds except a set of capacity $\mbox{Cap}$ zero.

We say that a function $u$ on $E$ is $(\EE)${\em -quasi-continuous} if for every
$\varepsilon>0$ there exists a closed set $F_\varepsilon\subset E$
such that $\mbox{Cap}(E\setminus F_\varepsilon)\le\varepsilon$ and
$u_{|F_\varepsilon}$ is continuous. By \cite[Theorem 2.1.3]{FOT},
each function $u\in D_e(\EE)$ admits a quasi-continuous
$m$-version. In the sequel, for $u\in D_e(\EE)$, we denote by
$\tilde u$ its quasi-continuous $m$-version. By \cite[Lemma 2.1.4]{FOT} if for given $(\EE)$-quasi-continuous functions $u,v$
we have  $u=v$, $m$-a.e., then $u=v$ $(\EE)$-q.e.

We say that a positive Borel measure $\nu$ on $E$ is $(\EE)$-{\em smooth}  if
\begin{enumerate}[(a)]
\item $\nu$ is absolutely
continuous with respect to the capacity $\mbox{Cap}$ generated by the Dirichlet form $(\EE,D(\EE))$,

\item there exists an increasing sequence $\{F_n\}$ of closed subsets of $E$ such that for any compact $K\subset E$, 
\[
\nu(F_n)<\infty,\quad n\ge 1,\qquad\lim_{n\rightarrow\infty}\mbox{Cap}(K\setminus F_n)\rightarrow 0.
\]
\end{enumerate}
A signed Borel measure $\nu$ on $E$ is smooth if its variation $|\nu|$ is smooth.
A sequence $\{F_n\}$
satisfying condition (b) of the above definition  is called a {\em generalized nest}.

Let $\BB(E)$ (resp. $\BB^+(E)$) denote the set of all Borel (resp.
non-negative Borel) measurable functions on $E$. We adopt the
following notation: for a positive Borel measure $\mu$ on $E$ and
$f\in \BB^+(E)$ we set
\[
\langle\mu, f\rangle=\int_E f\,d\mu,
\]
and  we denote by $f\cdot \mu$ the  Borel measure on $E$ such that
\[
\langle f\cdot\mu,\eta\rangle=\langle \mu,f\eta\rangle,\quad \eta\in \BB^+(E).
\]
If $\mu=m$, we write $\langle f,\eta\rangle=\langle f\cdot m,\eta\rangle$.
By $\MM_1$ we denote the set of Borel measures $\mu$ on $E$ for which $\langle |\mu|,1\rangle<\infty$.
We say that $\mu$ is a bounded Borel measure if $\mu\in\MM_1$.
Let $\nu$ be a positive smooth
measure. We set
\[
D(\EE^\nu)= D(\EE)\cap L^2(E;\nu),\qquad
\EE^\nu(u,v)=\EE(u,v)+\langle \tilde u\cdot\nu,\tilde
v\rangle,\quad u,v\in D(\EE^\nu).
\]
By \cite[Theorem 4.6]{MR}, $(\EE^\nu,D(\EE^\nu))$ is a
quasi-regular symmetric Dirichlet form on $L^2(E;\nu)$. By
\cite[Corollary 2.10]{MR}, there exists a unique non-positive self-adjoint
operator $(A^\nu, D(A^\nu))$  such that $D(A^\nu)\subset
D(\EE^\nu)$ and
\[
\EE^\nu(u,v)=(-A^\nu u,v),\quad u\in D(A^\nu), v\in D(\EE^{\nu}).
\]
We denote by $(J^\nu_{\alpha})_{\alpha>0}$ the resolvent generated
by $-A^\nu$, and by $(T^\nu_t)_{t\ge 0}$ the strongly continuous
Markov semigroup of contractions generated by $-A^\nu$.

\subsection{Probabilistic potential theory}
\label{sec2.2}

Let $\Delta$ be a one-point compactification of $E$, in case $E$ is not compact, or  an isolated point, in case $E$ is compact.  
Let $\cal D$ denote  the set of all 
functions $\omega: [0,\infty)\to E\cup\{\Delta\}$, that are right continuous and
possess the left limits for all $t\ge0$ (c\'adl\'ags), and  have the property that if $\omega(t)=\Delta$,
then $\omega(s)=\Delta,\, s\ge t$. We equip $\mathcal D$ with
the Skorokhod topology, see e.g. Section 12 of \cite{bil}. 
Define the {\em canonical process}: $X:\mathcal D\to\mathcal D$, $X_t(\omega):=\omega(t)$, $\omega \in \mathcal D$,
{\em shift operator}: $\theta_s:\mathcal D\to\mathcal D$, $\theta_s(\omega)(t)=\omega(t+s)$,  and   {\em life time}: $\zeta: \mathcal D\to [0,\infty]$,   $\zeta(\omega):=\inf\{t>0: X_t(\omega)=\Delta\}$.
By \cite[Theorem 7.2.1]{FOT}, there exists a Hunt process
\[
\mathbb X=((P_x)_{x\in E\cup \{\Delta\}},\,
\mathbb{F}=(\FF_t)_{t\ge0})
\]
- here $P_x$ is a probability measure on $\mathcal D$ for fixed $x\in E$ ($P_\Delta=\delta_{\{\Delta\}})$,
$\mathbb F$ is a filtration (non-decreasing sequence of $\sigma$-fields) -  such that for all $t\ge 0$ and  $f\in
\BB(E)\cap L^2(E;m)$,
\[
T_tf(x)=E_xf(X_t):=\int_{\mathcal D} f(X_t(\omega))\,dP_x(\omega)\quad m\mbox{-a.e. }x\in E,
\]
with the convention that $f(\Delta)=0$. For $f\in\BB^+(E)$, we put
\[
P_t f(x)=E_xf(X_t),\,\,t\ge0, \qquad R_\alpha f(x)=E_x\int_0^\zeta
e^{-\alpha t}f(X_t)\,dt,\,\,\alpha\ge0,
\]
and $R= R_0$. Recall  that  $f\in\BB^+(E)$ is an {\em excessive function} (with respect to $\mathbb X$) if
\[
\sup_{\alpha >0}\alpha R_\alpha f(x)=f(x),\quad x\in E.
\]
If  in the above condition $R_{\alpha}$ is replaced by $R_{\alpha+\beta}$ for some $\beta>0$ , then $f$
is called $\beta$-excessive (with respect to $\mathbb X$). 
We assume that $\mathbb X$ satisfies the {\em absolute
continuity condition}, i.e.  for any $f\in\BB^+_b(E)$,
\[
P_tf(x)=0,\, t>0, \, x\in E\quad\mbox{ whenever}\quad \int_Ef\,dm=0.
\]
By \cite[Lemma 4.2.4]{FOT} (by transiency of $\mathbb X$, \cite[Lemma 4.2.4]{FOT} holds for $\alpha=0$, too)
for any $\alpha\ge 0$ there exists $r_\alpha\in\BB^+(E\times E)$ such that $r_\alpha(x,\cdot), r_\alpha(\cdot,y)$
is $\alpha$-excessive for fixed $x,y\in E$, and 
\[
R_\alpha f(x)=\int_Ef(y)r_\alpha(x,y)\,m(dy),\quad x\in E,\, f\in\BB^+(E).
\]
%there exists a non-negative  Borel
%function $p:(0,\infty)\times E\times E\rightarrow \BR$ such that
%for every  $f\in \BB^+(E)$,
%\[
%P_tf(x)=\int_E p(t,x,y)f(y)\,m(dy),\quad x\in E,\, t>0.
%\]
We set $G_\alpha= r_\alpha$, and $G=G_0$. We call $G$ the {\em Green function} for $-A$ (or $\mathbb X$).
For a given positive Borel measure $\mu$ on $E$, we set
\begin{equation}
\label{eq.efmd12}
R_\alpha \mu(x)=\int_E G_\alpha(x,y)\,\mu(dy),\quad x\in E.
\end{equation}
Observe that $P_t(R_\alpha\mu)=\int_E P_tG_\alpha(\cdot,y)\,\mu(dy)$.
Therefore, since $G_\alpha(\cdot,y)$ is $\alpha$-excessive, for any positive $\mu$ we have that $R_\alpha\mu$ is $\alpha$-excessive as well.
A Borel measure $\mu$ on $E$ is called  {\em strictly smooth} (with respect to $\mathbb X$)  if it is
$(\EE)$-smooth and  there exists an increasing sequence $\{B_n\}$ of Borel
subsets of $E$ such that $\bigcup_{n\ge 1} B_n=E$ and
$R(\mathbf{1}_{B_n}\cdot|\mu|)$ is bounded for every $n\ge 1$.

From the definition of an excessive function it
follows directly that under the  absolute continuity condition for
$\mathbb X$, if $f\le g$ $m$-a.e for some excessive
functions $f,g$, then $f\le g$ on $E$. We will use this property
frequently in the paper without special mentioning.

We say that $A\subset E$ is {\em nearly Borel} (with respect to $\mathbb X$) if there exist $B_1,
B_2\in\BB(E)$ such that $B_1\subset A\subset B_2$ and for every
finite positive Borel measure $\mu$ on $E$,
\[
P_\mu(\exists\,{t\ge 0}\,\,\, X_t\in B_2\setminus B_1)=0,
\]
where $P_\mu(d\omega)=\int_E P_x(d\omega)\,\mu(dx)$. By
$\BB^n(E)$, we denote the class of all nearly Borel sets. It is
clear that $\BB(E)\subset \BB^n(E)$. For  $A\in \BB^n(E)$, we set
\[
\sigma_A=\inf\{t>0: X_t\in A\},\qquad \tau_A=\sigma_{E\setminus
A}\wedge\zeta.
\]
We say that a set $A\subset E$ is   {\em polar} (with respect to $\mathbb X$)  if there exists $B\in
\BB^n(E)$ such that $A\subset B$ and
\[
P_x(\sigma_B<\infty)=0,\quad x\in E.
\]
By \cite[Theorem 4.1.2, Theorem 4.2.1]{FOT}, $\mbox{Cap}(A)=0$ if
and only if $A$ is polar. A nearly Borel set $D$ is an  {\em absorbing set} (with respect to $\mathbb X$) if 
$P_x(\tau_D=\zeta)=1$, $x\in D$.
Observe that if $N$ is a polar set, then $E\setminus N$ is an absorbing set.

Let $\TT$ be the topology generated by the metric on $E$.  By
$\TT_f$ we denote the smallest topology on $E$ for which all
excessive functions (with respect to $\mathbb X$) are continuous. This topology is called in the
literature the {\em fine topology} (with respect to $\mathbb X$).  By \cite[Section II.4]{BG}, $\TT\subset
\TT_f$ and $A$ is a finely open set if and only if for every $x\in
A$ there exists $D\in\BB^n(E)$  such that $D\subset A$ and
\[
P_x(\tau_D>0)=1.
\]
In other words, starting from  $x\in A$,  the process $X$ spends some
nonzero  time  in $A$ until it exits $A$. Observe that
each polar set is finely closed. By \cite[Theorem II.4.8]{BG},
$f\in \BB^n(E)$ is finely continuous if and only if the process
$f(X)$ is right-continuous under the measure $P_x$ for every $x\in
E$.

Let $D\subset E$ be finely open. We denote by
\[
\mathbb X^D=((P^D_x)_{x\in D\cup \{\Delta\}},\,\mathbb F^D=(\FF^D_t)_{t\ge0})
\]
the part of the process $\mathbb X$ on $D$ (see \cite[Section
A.2]{FOT}). Its life time satisfies $\zeta=\tau_D$.  By \cite[Theorem
A.2.8]{FOT}, $\mathbb X^D$ is again a Hunt process and
\[
P^D_tf(x):=E^D_xf(X_t)=E_x f(X_t)\mathbf{1}_{\{t<\tau_D\}},\quad x\in D,
\]
where $E^D_x$ denotes the expectation with respect to the measure $P^D_x$. We also set
\begin{equation}
\label{eq3.0sd}
R^Df(x)= E_x\int_0^{\tau_D} f(X_t)\, dt=\int_0^\infty P_t^Df(x)\,dt,\quad x\in D.
\end{equation}
From this formula, we deduce that if $D$ is an  {\em absorbing set} (with respect to $\mathbb X$), i.e.
$P_x(\tau_D=\zeta)=1$, $x\in D$, then
\begin{equation}
\label{eq3.s1}
R^Df(x)=Rf(x),\quad x\in D.
\end{equation}
It is clear that $\mathbb X^D$ satisfies the absolute continuity condition. Therefore there exists a Green function $G^D$ for $\mathbb X^D$. From (\ref{eq3.s1}) it follows that if $D$ is an absorbing set, then
\begin{equation}
\label{eq3.s2}
G^D(x,y)= G(x,y),\quad x,y\in D.
\end{equation}

Observe that most of  the notions introduced in Section \ref{sec2.1}, \ref{sec2.2} depend on the form  $\EE$ or the process $\mathbb X$.
To underline this in all the foregoing definitions, where this dependence occurs, we  added $\EE$ or $\mathbb X$ in parenthesis.
In what follows all the notions  depending  on a form or a process  are  by default understood as the notions with respect to $\EE$ or $\mathbb X$ 
(we do not  indicate them in the sequel)
unless it is stated otherwise.

\section{PCAFs of $\mathbb X$ with minimal exceptional set}

The notion of a positive continuous additive functional (PCAF) of a
general Markov process was introduced by Revuz in \cite{Revuz}. This notion allowed the author to show
in  \cite{Revuz}  a duality between a subclass of smooth
measures  and a class of  increasing  processes with additivity
property. Unfortunately,  the subclass of smooth
measures considered by Revuz was too restrictive in many applications. It  did not even
cover the class of bounded smooth measures. To get the general
duality, covering the class of all smooth measures, Fukushima (see \cite{FOT}) and Silverstein (see
\cite{Si}) extended the notion of PCAF (see Definition
\ref{def.3.3.4} below). The crucial ingredient of the extended
definition is the notion of  so-called exceptional set $N$ which
depends on the particular PCAF. In the extended definition  the
desirable properties of the functional -  additivity, continuity
etc. -  are satisfied under the measure $P_x$ for $x\notin N$.
Thanks to the more general notion of PCAF one can  get a
one-to-one correspondence between the class of all  smooth
measures and PCAFs. Nowadays,  PCAFs in the sense of Revuz are
called strict PCAFs to distinguish them from PCAFs introduced by
Fukushima and Silverstein. Smooth measures associated with strict
PCAFs are called strictly smooth measures.

In the present section, we  show that the exceptional set $N$ for
a given PCAF can be chosen in a canonical way and that this choice
is in some sense minimal. In the special case when  $\mathbb X$
is a Brownian motion, our result follows from  the paper by
Baxter, Dal Maso and  Mosco \cite{BDM} (see also \cite{Sturm}).

In what follows, we say that  some property  holds a.s. if it holds $P_x$-a.s. for every $x\in E$.

\begin{definition}
\label{def.3.3.4}
We say that an $\mathbb F$-adapted process $A=(A_t)_{t\ge0}$ is a {\em positive continuous additive functional}
of $\mathbb X$ (PCAF) if there exists a polar set $N$ and $\Lambda\in\FF_\infty$ such that
\begin{enumerate}[(a)]
\item$P_x(\Lambda)=1,\,\,\, x\in E\setminus N$,
\item $P_x(A_0=0)$, $x\in E\setminus N$, and $A_t(\omega)\ge 0$, $t\ge 0$, $\omega\in \Lambda$,

\item $\theta_t(\Lambda)\subset \Lambda,\, t>0$, and for every $\omega\in \Lambda$,
$A_{t+s}(\omega)=A_t(\omega)+A_s(\theta_s\omega)$, $s,t\ge 0$,

\item $A_t(\omega)<\infty$, $t<\zeta(\omega)$, $\omega\in\Lambda$, and
$A_t(\omega)=A_{\zeta(\omega)},\,\,\,  t\ge \zeta(\omega),\, \omega\in\Lambda$,
\item $[0,\infty)\ni t\mapsto A_t (\omega)$ is continuous for every $\omega\in\Lambda$.
\end{enumerate}
\end{definition}

The set $N$ is called an {\em exceptional set for} $A$ and $\Lambda$ is called a {\em defining set for} $A$.
If $N=\emptyset $, then $A$ is called  a {\em strict PCAF} of $\mathbb X$. Notice  that $A$ is a PCAF of $\mathbb X$ if and only if $A$ is a strict PCAF of $\mathbb X^{E\setminus N}$.

The one-to-one correspondence between PCAFs $A$ of $\mathbb X$ and positive smooth measures $\nu$
is characterized by the relation
\[
\lim_{t\rightarrow 0^+}\frac{1}{t}E_m\int_0^t f(X_t)\,dA_t=\langle \nu,f\rangle,\quad f\in\BB^+(E).
\]
In the literature it is often called  the {\em Revuz duality}. If $A$ is a strict PCAF, then under our assumption of absolute continuity of $\mathbb X$, the above relation may be expressed equivalently as
\[
E_x\int_0^\zeta f(X_t)\,dA_t=\int_E G(x,y)f(y)\,\nu(dy),\quad x\in E,
\]
for all $f\in \BB^+(E)$.

By \cite[Theorem 5.1.7]{FOT}, for every positive smooth  measure
$\nu$ such that $R\nu$ is bounded, there exists a strict PCAF  $A$
of $\mathbb X$ in the Revuz duality with $\nu$. Let $\nu$ be a
positive smooth measure. By \cite[Theorem 2.2.4]{FOT}, there
exists a generalized nest  $\{F_n\}$ of closed subsets of $E$ such
that $R\nu_n\le n$, $n\ge1$,  and $\nu_n\nearrow \nu$, where
$\nu_n=\mathbf{1}_{F_n}\cdot\nu$ (the convergence follows from the
fact that $\{F_n\}$ is a generalized nest).

In what follows we adopt the following notation. For  given $\alpha\ge 0$, $f\in\BB^+(E)$, and non-negative $\mathbb F$-adapted  c\`adl\`ag process  $Y$,
we let 
\begin{equation}
\label{eq.dofcaf12}
\phi^{\alpha,f}_Y(x)=E_x\int_0^\zeta e^{-\alpha t}f(X_t)e^{-Y_t}\,dt,\quad x\in E.
\end{equation}
We also set $\phi^{\alpha}_Y:= \phi^{\alpha,1}_Y$.

\begin{proposition}
\label{prop4.1}
Let $\nu$ be a positive smooth measure on $E$ and let
\begin{align}
\label{eq3.1}
E_\nu=\{x\in E:\exists\, V_x & \mbox{-finely open neighborhood
of $x$}\nonumber \\
&\mbox{such that } \int_{V_x} G(x,y)\,\nu(dy)<\infty\}.
\end{align}
Let $\{\nu_n\}$ be a sequence of positive strictly smooth measures
such that $\nu_n\nearrow \nu$, and for $n\ge1$ let $A^n$ be a
strict PCAF of $\mathbb X$ in the Revuz correspondence with
$\nu_n$. Set $A_t:=\sup_{n\ge 1} A^n_t$, $t\ge 0$. Set $N_\nu:=E\setminus E_\nu$. Then
\begin{enumerate}[\rm(i)]
\item $\phi^{\alpha,f}_{A}$
is finely continuous for any $\alpha>0$ and $f\in\BB^+_b(E)$,
\item If $f\in\BB^+(E)$ and $Rf$ is finite, then $\phi^{0,f}_{A}$
is finely continuous.
\item The process
$A$ is a PCAF of $\BX$ with
the exceptional set $N_\nu$.

\item For all $x\in E_\nu$ and $\eta\in\BB^+(E)$,
\begin{equation}
\label{eq4.1} E_x\int_0^\zeta \eta(X_t)\,dA_t=\int_E
G(x,y)\eta(y)\,\nu(dy).
\end{equation}

\item For any $\alpha\ge 0$, $E_\nu=\{\phi^\alpha_A>0\}$.
\item For every $x\in N_\nu$, $P_x(A_t=\infty,\, t> 0)=1$.
\end{enumerate}
\end{proposition}
\begin{proof}
(i) Set 
\[
u^n_\alpha(x)=E_x\int_0^\zeta e^{-\alpha t}f(X_t)(1-e^{-A^n_t})\,dt,\quad x\in E.
\]
Clearly, $(u^n_\alpha)_{n\ge 1}$ is nondecreasing. Set $u_\alpha=\sup_{n\ge 1} u^n_\alpha$.
By the strong Markov property and additivity of $A^n$,
\[
e^{-\alpha t}P_t u^n_\alpha(x)=E_{x}\int_t^\zeta e^{-\alpha s}f(X_s)(1-e^{-(A^n_s-A^n_t)})\,ds,\quad x\in E.
\]
From this we  easily deduce that $u^n_\alpha$ is $\alpha$-excessive  for any $\alpha>0, n\ge 1$. 
Hence, by \cite[4.d), page 222]{DellacherieMeyer}, $u_\alpha$ is $\alpha$-excessive. 
Observe that
\[
\phi_{A^n}^{\alpha,f}(x)=u_\alpha^n(x)-R_\alpha f(x),\quad x\in E.
\]
Letting $n\rightarrow \infty$, we get
\[
\phi_{A}^{\alpha,f}(x)=u_\alpha(x)-R_\alpha f(x),\quad x\in E.
\]
Since $u_\alpha, R_\alpha f$ are finely continuous (as excessive functions) and finite on $E$, we conclude from the above equation that $\phi^{\alpha,f}_{A}$
is finely continuous for any $\alpha>0$. For (ii), we let $\alpha\to 0^+$  in the above equation.
We thus get 
\[
\phi_{A}^{0,f}(x)=u_0(x)-R f(x),\quad x\in E.
\]
Clearly, $u_0, Rf$ are  excessive functions, and hence finely-continuous.
Since $Rf$ is finite, $u_0-Rf$ is finely continuous.
For (iii), observe first that  $\phi^\alpha_A(x)>0$, $x\in E_\nu$. Indeed,
\begin{align}
\label{eq4.02}
\nonumber
\int_{V_x}G(x,y)\,\nu(dy)&= \lim_{n\rightarrow \infty}\int_E\mathbf{1}_{V_x}(y) G(x,y)\,\nu^n(dy)\\
&=
\lim_{n\rightarrow \infty}E_x\int_0^\zeta \mathbf{1}_{V_x}(X_t)\,dA^n_t
\ge \lim_{n\rightarrow \infty} E_xA^n_{\tau_{V_x}\wedge\zeta}= E_xA_{\tau_{V_x}\wedge\zeta}=E_xA_{\tau_{V_x}}.
\end{align}
Since $V_x$ is finely open, $P_x(\tau_{V_x}>0)=1$. From this and the definitions of $E_\nu$ and $\phi^\alpha_A$ we deduce that $\phi^\alpha_A$ is strictly positive on $E_\nu$.
By \cite[Lemma
5.1.5(ii)]{FOT}, we have $E_x\int_0^\zeta e^{-t}
\phi^1_{A^k}(X_t)\,dA^k_t\le 1$, $x\in E$, so $E_x\int_0^\zeta
e^{-t} \phi^1_{A^k}(X_t)\,dA^l_t\le 1$, $x\in E$ for $k\ge l$.
Letting $k\rightarrow \infty$, we obtain
\begin{equation}
\label{eq4.01}
E_x\int_0^\zeta e^{-t} \phi^1_A(X_t)\,dA^l_t\le 1,\quad x\in E,\quad l\ge 1.
\end{equation}
Since $\phi^1_A$ is finely continuous and strictly positive on
$E_\nu$, and $N_\nu$ is polar, we conclude from (\ref{eq4.01}) that
$A_t<\infty$, $t<\zeta$, $P_x$-a.s. for $x\in E_\nu$.  Hence, by
\cite[Lemma 1, page 182]{Meyer} (see also \cite[Lemma 1.1]{MW})
applied to the sequence $\{A^n\}$ regarded as a sequence of strict
PCAFs of $\mathbb X^{E_\nu}$, we get that $A$ is a strict PCAF of
$\mathbb X^{E_\nu}$, which is equivalent to the statement that $A$
is a PCAF  of $\mathbb X$ with the exceptional set $N_\nu$.  This
proves (iii). For $x\in E_\nu$, and $\eta\in\BB^+(E)$,  we have
\begin{align*}
E_x\int_0^\zeta \eta(X_t)\,dA_t&=\lim_{n\rightarrow \infty}
E_x\int_0^\zeta \eta(X_t)\,dA^n_t\\& =\lim_{n\rightarrow \infty}
\int_E\eta(y)G(x,y)\nu^n(dy)=\int_E\eta (y)G(x,y)\nu(dy).
\end{align*}
This completes the proof of  (iv).
By (\ref{eq4.02}), $E_\nu\subset \{\phi^\alpha_A>0\}$ for any $\alpha\ge 0$. Suppose that
$\phi^\alpha_A(x_0)>0$ for some $x_0\in E$. Then there exists $c>0$ such that $\phi^0_A(x_0)>c$.
By \cite[Corollary 1.3.6]{Oshima},  there exists a strictly positive bounded Borel measurable
function $g$ such that $Rg$ is bounded. Let $c_1=\sup_{x\in E}
Rg(x)$.  By the Lebesgue monotone convergence theorem 
\[
\phi^{1/n,g_n}_A(x)\nearrow \phi^{0}_A(x),\, x\in E,
\]
where $g_n=ng/(1+ng),\, n\ge 1$. Thus, there exists $n_0$ such that $\phi^{1/n_0,g_{n_0}}_A(x_0)>c$.
Write $V_{x_0}=\{\phi^{1/n_0,g_{n_0}}_A(x_0)>c\}$. Of course $x_0\in V_{x_0}$, and since $\phi^{1/n_0,g_{n_0}}_A$
is finely continuous (by (i)), $V_{x_0}$ is finely open.  
By \cite[Lemma
5.1.5(ii)]{FOT}, 
\begin{equation}
\label{eq.wsp.ctd4}
R_{1/n_0}g_{n_0}(x)\ge E_x\int_0^\zeta \phi^{1/n_0,g_{n_0}}_{A^l}(X_t)\,dA^l_t,\quad x\in E,\quad l\ge 0.
\end{equation}
So, as in the case of (\ref{eq4.01}), we get
\[
n_0 c_1\ge R_{1/n_0}g_{n_0}(x)\ge E_x\int_0^\zeta \phi^{1/n_0,g_{n_0}}_A(X_t)\,dA^l_t,\quad x\in E,\, l\ge 1.
\]
Hence
\[
\frac{n_0 c_1}{c}\ge E_{x_0}\int_0^\zeta \mathbf{1}_{V_{x_0}}(X_t)\,dA^l_t=
\int_{V_{x_0}} G(x_0,y)\,\nu^l(dy),\quad l\ge 1.
\]
From this we conclude that $x_0\in E_\nu$. Thus,
$\{\phi^\alpha_A>0\}\subset E_\nu$,  which combined with the  reverse inequality proved above  implies (v).  (vi) is a direct consequence of (v). 
\end{proof}

\begin{corollary}
Let $\nu$ be a positive smooth measure on $E$ and  $B$  a PCAF
of $\mathbb X$ with exceptional set $N\subset N_\nu$ such that $B$
is in the Revuz correspondence with $\nu$ and $\phi^1_B$ (cf. \eqref{eq.dofcaf12})
is finely continuous. Then $N=N_\nu$, and for every $x\in E$,
$P_x(A_t=B_t,\, t> 0)=1$, where $A$ is the PCAF constructed in
Proposition \ref{prop4.1}.
\end{corollary}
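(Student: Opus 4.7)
My plan is twofold: first establish $A_t=B_t$ for all $t\ge 0$ $P_x$-a.s.\ for $x\in E_\nu$ using the injectivity of the Revuz correspondence; second, prove $N=N_\nu$ by transferring to $\phi_B$ the finely continuous potential argument at the end of the proof of Theorem~\ref{prop4.1}.

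For the identification on $E_\nu$, note that $N$ is polar by Definition~\ref{def.3.3.4}, so $E\setminus N$ is absorbing and $B$ is a strict PCAF of the Hunt process $\BX^{E\setminus N}$ in Revuz correspondence with $\nu$ (smooth measures do not charge polar sets). Analogously, $A$ is a strict PCAF of $\BX^{E_\nu}$ in Revuz correspondence with $\nu$ by parts (ii)--(iii) of Theorem~\ref{prop4.1}. Restricting $B$ to $\BX^{E_\nu}$ (legitimate since $E_\nu\subset E\setminus N$) and appealing to the uniqueness of the strict-PCAF/smooth-measure Revuz correspondence on the Hunt process $\BX^{E_\nu}$ yields $P_x(A_t=B_t\text{ for all }t\ge 0)=1$ for every $x\in E_\nu$. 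Since $N_\nu$ is polar, trajectories starting in $E_\nu$ remain in $E_\nu$ up to $\zeta$, so no further care is needed to extend the identity along paths.

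For the minimality $N=N_\nu$, by hypothesis it suffices to prove $E\setminus N\subset E_\nu$. Fix $x\in E\setminus N$; conditions (b) and (e) of Definition~\ref{def.3.3.4} give $B_0=0$ $P_x$-a.s.\ and continuity of $B$ on $[0,\zeta)$, so $e^{-B_t}\to 1$ as $t\to 0^+$ and hence $\phi_B(x)>0$. Since $\phi_B$ is finely continuous, $V_x:=\{\phi_B>c\}$ is a finely open neighborhood of $x$ for some $c>0$. One then runs verbatim the last paragraph of the proof of Theorem~\ref{prop4.1}: choose a strictly positive Borel $f$ with $Rf$ bounded, set $\psi=R_1 f$ and $U_x=\{\psi>c_1\}$, shrink $V_x$ inside $U_x$, use $R\psi=R_1 Rf\le Rf$ together with \cite[Lemma 5.1.5]{FOT} applied to $B$ (viewed as a strict PCAF of $\BX^{E\setminus N}$) to obtain $c_2\ge c_1 c\int_{V_x}G(x,y)\,\nu(dy)$, whence $x\in E_\nu$. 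For the remaining case $x\in N_\nu=N$, the contrapositive of the implication just proved gives $\phi_B(x)=0$, which forces $B_t=\infty$ for Lebesgue-a.e.\ $t>0$ $P_x$-a.s.; combined with the monotonicity of $B$ along paths (inherited through the polarity of $N$ and the additivity of $B$ on $\BX^{E\setminus N}$), this yields $B_t=\infty$ for every $t>0$ $P_x$-a.s., matching Theorem~\ref{prop4.1}(iv). The principal technical obstacle is precisely this transfer of the localization argument to $B$: Lemma 5.1.5 of \cite{FOT} is formulated for strict PCAFs, so throughout one must treat $B$ as a strict PCAF of the part process $\BX^{E\setminus N}$ rather than of $\BX$ itself.
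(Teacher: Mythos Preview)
Your identification $A=B$ under $P_x$ for $x\in E_\nu$ via the uniqueness of the Revuz correspondence matches the paper's first step, and your derivation of $E\setminus N\subset E_\nu$ (hence $N=N_\nu$) by transferring the localization argument of Theorem~\ref{prop4.1} to $B$ is a correct alternative to the paper's route. However, the final step contains a genuine gap. You claim that ``the contrapositive of the implication just proved gives $\phi_B(x)=0$'' for $x\in N_\nu$, but the implication $\phi_B(x)>0\Rightarrow x\in E_\nu$ was obtained through \cite[Lemma~5.1.5]{FOT} applied to $B$ as a strict PCAF of $\BX^{E\setminus N}$, and is therefore only established for $x\in E\setminus N$. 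Since you have just shown $E\setminus N=E_\nu$, the contrapositive is vacuous on $N_\nu$: it tells you nothing about $\phi_B$ on the exceptional set $N$, where $B$ is not controlled by Definition~\ref{def.3.3.4}. You therefore have no argument for $P_x(B_t=\infty,\,t>0)=1$ when $x\in N_\nu$, and the conclusion $P_x(A_t=B_t,\,t>0)=1$ for \emph{every} $x\in E$ is not yet secured.

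The paper fills this gap by a short fine-continuity extension that also renders your localization detour unnecessary. From $A=B$ on $E_\nu$ one gets $\phi_A=\phi_B$ on $E_\nu$; since both functions are finely continuous and $N_\nu$ is polar, the equality $\phi_A=\phi_B$ extends to all of $E$. Then $\phi_B=\phi_A=0$ on $N_\nu$ by Theorem~\ref{prop4.1}, which forces $P_x(B_t=\infty,\,t>0)=1$ there and yields both $N=N_\nu$ (because $B$ cannot satisfy Definition~\ref{def.3.3.4}(d) under such $P_x$) and $A_t=B_t$ for $t>0$ under every $P_x$. This single step replaces your re-run of the localization argument and simultaneously closes the gap at points of $N_\nu$.
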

\begin{proof}
Since $A$ and $B$ satisfy (\ref{eq4.1}) for every $x\in E_\nu$, applying \cite[Proposition IV.2.12]{BG} yields $P_x(A_t=B_t,t\ge0)=1$, $x\in E_\nu$. This implies that $\phi^1_A=\phi^1_B$ on $E_\nu$. Since $\phi^1_A$ and $\phi^1_B$ are finely continuous,  in fact $\phi^1_A=\phi^1_B$. Thus $P_x(B_t=\infty, t>0)=1,\, x\in N_\nu$, which implies that $N=N_\nu$ and $P_x(A_t=B_t, t>0)=1$, $x\in E$.
\end{proof}

\begin{corollary}
Define $E^\alpha_\nu$ by \mbox{\rm(\ref{eq3.1})} but with $G$ replaced by $G_{\alpha}$
%\[
%E^\alpha_\nu=\{x\in E: \exists\, V_x\mbox{-finely open neighborhood of  $x$ such that } %\int_{V_x} G_\alpha(x,y)\,\nu(dy)<\infty\}.
%\]
Then for every $\alpha>0$, $E^\alpha_\nu=E_\nu$.
\end{corollary}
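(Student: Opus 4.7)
The plan is to establish the two inclusions of $E_\nu=E^\alpha_\nu$ separately.

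The inclusion $E_\nu\subseteq E^\alpha_\nu$ is immediate from the pointwise bound $G_\alpha(x,y)=\int_0^\infty e^{-\alpha t}p(t,x,y)\,dt\le\int_0^\infty p(t,x,y)\,dt=G(x,y)$: any finely open neighborhood $V_x$ witnessing $x\in E_\nu$ also witnesses $x\in E^\alpha_\nu$.

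For the reverse inclusion I would argue by contrapositive and show $N_\nu\subseteq N^\alpha_\nu$, where $N^\alpha_\nu=E\setminus E^\alpha_\nu$. The key ingredient is the $\alpha$-version of the Revuz identity: for the strict PCAFs $A^n$ from Theorem \ref{prop4.1}, one has, at every $y\in E$,
\[
E_y\int_0^\zeta e^{-\alpha t}\mathbf{1}_V(X_t)\,dA^n_t=\int_V G_\alpha(y,z)\,\nu_n(dz).
\]
Monotone convergence in $n$ (with $dA^n\nearrow dA$ and $\nu_n\nearrow\nu$) transfers this identity to the PCAF $A=\sup_n A^n$ at every $y\in E$:
\[
E_y\int_0^\zeta e^{-\alpha t}\mathbf{1}_V(X_t)\,dA_t=\int_V G_\alpha(y,z)\,\nu(dz).
\]
Now fix $x\in N_\nu$ and any finely open neighborhood $V$ of $x$. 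By Theorem \ref{prop4.1}(iv), $P_x(A_t=\infty,\,t>0)=1$; and since $V$ is finely open with $x\in V$, one has $P_x(\tau_V>0)=1$, so there exists $s>0$ with $P_x(\tau_V>s)>0$. On the event $\{\tau_V>s\}$ the process satisfies $X_t\in V$ for $t\in[0,s]$ and $A_s=\infty$, whence
\[
\int_0^\zeta e^{-\alpha t}\mathbf{1}_V(X_t)\,dA_t\ge\int_0^s e^{-\alpha t}\,dA_t\ge e^{-\alpha s}A_s=\infty.
\]
The displayed identity then forces $\int_V G_\alpha(x,z)\,\nu(dz)=\infty$, and since $V$ was arbitrary, $x\in N^\alpha_\nu$.

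The only delicate point is extending the $\alpha$-Revuz formula from each strict PCAF $A^n$ (where it holds at every point of $E$) to the PCAF $A$ at points of the exceptional set $N_\nu$, where $A$ no longer enjoys the usual PCAF regularity. This is precisely why I work with the sequence $\{A^n\}$ and pass to the limit by monotone convergence rather than invoking the Revuz identity for $A$ directly.
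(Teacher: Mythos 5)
Your proof is correct. The paper states this corollary without proof, so there is nothing to compare against directly, but your argument is a natural derivation from Theorem \ref{prop4.1}: the easy inclusion $E_\nu\subseteq E^\alpha_\nu$ from $G_\alpha\le G$, and the reverse via the $\alpha$-Revuz formula for the strict approximants $A^n$ (which holds everywhere because both sides are $\alpha$-excessive and agree $m$-a.e., hence everywhere under the absolute continuity condition), passed to $A=\sup_n A^n$ by monotone convergence of the Stieltjes measures on the defining set $\tilde\Lambda$, combined with Theorem \ref{prop4.1}(iv) and $P_x(\tau_V>0)=1$ for finely open $V\ni x$. You are right to flag that the Revuz identity cannot be invoked for $A$ directly at $x\in N_\nu$; passing through the $A^n$ is exactly what makes the step at exceptional points legitimate. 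One could alternatively observe that the proof of Theorem \ref{prop4.1} identifies $E_\nu$ with the $\alpha$-independent set $\{\phi_A>0\}$ and check that the same argument run with $G_\alpha$ in place of $G$ again yields $\{\phi_A>0\}$; your route is equivalent and somewhat more self-contained.
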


From now on, for a given smooth measure $\nu$, we denote by $A^\nu$ the PCAF of $\mathbb X$ with exceptional set $N_\nu$ constructed in Proposition \ref{prop4.1}.

We close this section with a remark concerning the case when $\mathbb X$ is a Brownian motion.  In that case Baxter, Dal Maso and Mosco \cite{BDM}
and Sturm \cite{Sturm} have shown a duality between positive Borel measures absolutely continuous with respect to Newtonian capacity (i.e. satisfying only condition (a) of the definition of a smooth measure) and  so called positive additive functionals (PAFs) of $\mathbb X$, i.e.  $\mathbb F$-adapted right continuous processes $A:\Omega\times [0,\infty)\rightarrow [0,\infty]$ such that
\begin{enumerate}[(a)]
\item $\forall _{s,t\ge 0}\quad A_{t+s}=A_t+A_s\circ\theta_t$ a.s.,

\item $\forall_{s\ge 0}\quad t\mapsto A_{s-t}\circ \theta_t$ is a.s. right-continuous on $[0,s]$.
\end{enumerate}
Observe that for given smooth measure $\nu$, we have  that  $\tilde A^\nu_t:= A^\nu_{t+}$, $t\ge 0$ is a PAF of $\mathbb X$  in Revuz duality with $\nu$
considered in \cite{BDM} and \cite{Sturm}.
So, in other words, $\tilde A^\nu$ is the unique PAF of $\mathbb X$ associated with $\nu$.

\section{Duality solutions to Schr\"odinger equations }

Let $\nu$ be a positive smooth measure on $E$ and
$(\EE^\nu,D(\EE^\nu))$ be a quasi-regular Dirichlet form on
$L^2(E;m)$ being the perturbation of a regular Dirichlet form
$(\EE,D(\EE))$ by a measure $\nu$. We denote by $-A+\nu$ the
self-adjoint operator on $L^2(E;m)$  generated by
$(\EE^\nu,D(\EE^\nu))$, and by $(T^\nu_t)_{t\ge0}$ the Markov
semigroup of contractions on $L^2(E;m)$ generated by $-A+\nu$. The
resolvent determined by $(T^\nu_t)_{t\ge0}$  shall be denoted by
$(J^\nu_\alpha)_{\alpha>0}$.

In \cite{MO} the Schr\"odinger equation (\ref{eq1.1}) with $A$
defined by (\ref{eq1.7}), and $\mu$ having compact support is considered.  Let  $a$ be a symmetric  matrix-valued bounded Borel
measurable function on a bounded domain $D$ such that $\lambda
I\le a$ for some $\lambda>0$. In \cite[Definition 5.4]{MO} the following
definition of a solution is adopted:  $u\in L^1(D;m)$ is  a
 duality solution to (\ref{eq1.1}) in the sense of \cite{MO} if
\begin{equation}
\label{eq4.s1}
\langle u,\eta\rangle=\langle \mu,\hat\zeta_\eta\rangle,
\quad \eta\in L^\infty(D;m),
\end{equation}
where $\zeta_\eta$ is the unique minimizer of the energy functional
\[
E(u)=\frac{1}{2}\int_D|\sigma\nabla u|^2\,dm
+\frac12\int_D\tilde u^2\,d\nu-\int_D  u\eta\,dm,
\quad u\in H^1_0(D)\cap L^2(D;\nu),
\]
$\sigma\cdot \sigma^*=a$, and $\hat\zeta_{\eta}$ is defined as
\[
\hat\zeta_\eta(x)=\lim_{r\rightarrow 0^+}
\frac{1}{|B(x,r)|}\int_{B(x,r)}\zeta_\eta(y)\,dy,\quad x\in D.
\]
The limit above is well defined in each point $x\in D$ because
$\zeta_\eta$ is a difference  of bounded superharmonic functions, so each
point in $D$ is a Lebesgue point for $\zeta_\eta$. This definition
is a generalization of the notion of  solution considered in
\cite{DM1,DM} in case $A$ is defined by (\ref{eq1.7}) and the
measure $\mu$ belongs to $H^{-1}(D)$. Under this additional
assumption on $\mu$, $u$ is a duality solution  to (\ref{eq1.1})
if and only if $u\in H^1_0(D)$ and for every $\eta\in H^1_0(D)\cap
L^2(E;\nu)$,
\[
\int_Da \nabla u\nabla \eta+\int_D \tilde u\tilde \eta \,d\nu
=\langle \mu,\eta\rangle_{H^{-1}(D),H^1_0(D)},
\]
see \cite[Remark 5.5]{MO}.

The goal of this section is to extend the notion of  duality
solutions to Schr\"odinger equation (\ref{eq1.1}) with  general
Dirichlet operator $A$. Before giving the rigorous definition,
some  remarks are in order. Observe that in fact, $\zeta_\eta=J^\nu\eta,\, m$-a.e., so the
key in the definition of duality solution is to give a proper
pointwise meaning to the function $J^\nu\eta$.  For general $A$, we
can no longer apply the notion of Lebesgue's points. Instead,  we
use the notion of Green's function.  The problem is that without
additional assumptions on the measure $\nu$, in general there is
no Green function $G^\nu$ for the operator $-A+\nu$ on the whole
$E$. To overcome  this difficulty, we use the fact that there
always exists a Green function $G^{E_\nu,\nu}$ on $E_\nu$ (cf. \eqref{eq3.1}) for the
operator $(-A+\nu)_{|E_\nu}$ being the restriction of $-A+\nu$ to
$E_\nu$, and then we show how to extend $G^{E_\nu,\nu}$ in a
canonical way to the whole $E$.

\subsection{Existence and uniqueness of  duality solutions}
\label{sec4.1}

By \cite[Theorem A.2.11]{FOT}, there exists a Hunt process
\[
\mathbb X^\nu=((P^\nu_x)_{x\in E\cup \{\Delta\}},\,\mathbb F^\nu:=\{\FF^\nu_t,\, t\ge 0\})
\]
associated with the form $(\EE^\nu,D(\EE^\nu))$ in the sense that  for every $f\in \BB(E)\cap L^2(E;m)$,
\begin{equation}
\label{eq4.s23}
T^\nu_tf(x)=E^\nu_xf(X_t)\quad m\mbox{-a.e. }x\in E.
\end{equation}
We set
\[
P^\nu_tf(x)= E^\nu_xf(X_t),\qquad R^\nu f(x)= E^\nu_x\int_0^\zeta f(X_t)\,dt,\quad x\in E,
\]
where $E^{\nu}_x$ stands for the expectation with respect to $P^{\nu}_x$. By the construction of the process $\mathbb X^\nu$,
\begin{equation}
\label{eq4.s21}
R^\nu f(x)=E_x\int_0^\zeta e^{-A^\nu_t}f(X_t)\,dt,\quad x\in E_\nu.
\end{equation}
Since $A^\nu$ is a strict PCAF of $\mathbb X^{E_\nu}$,  by
\cite[Exercise 6.1.1]{FOT} the process $\mathbb X^{E_\nu,\nu}$
satisfies the absolute continuity condition. Therefore there
exists a Green function $G^{E_\nu,\nu}$ on $E_\nu\times E_\nu$
associated with the process $\mathbb X^{E_\nu,\nu}$, i.e. for
every  $\eta\in\BB^+(E)$,
\begin{align}
\label{eq4.s2ab}
E^{E_\nu,\nu}_x\int_0^{\zeta} \eta(X_t)\,dt=\int_{E_\nu} G^{E_\nu,\nu}(x,y)\eta(y)\,m(dy),\quad x\in E_\nu.
\end{align}
Moreover, by \cite[Exercise 6.1.1]{FOT} again,
\begin{equation}
\label{eq4.4}
G^{E_\nu,\nu}(x,y)+\int_{E_\nu}
G^{E_\nu}(x,z)G^{E_\nu,\nu}(z,y)\,\nu(dz)=G^{E_\nu}(x,y),\quad x,y\in E_\nu.
\end{equation}
Observe that by \eqref{eq3.0sd} and \eqref{eq4.s21},
\begin{align}
\label{eq4.s2bc}
E_x\int_0^{\tau_{E_\nu}} e^{-A^\nu_t} \eta(X_t)\,dt=E^{E_\nu,\nu}_x\int_0^{\zeta} \eta(X_t)\,dt,\quad x\in E_\nu.
\end{align}
Since $N_\nu$ is a polar set for $\mathbb X$, $P_x(\tau_{E_\nu}=\zeta)=1,\, x\in E_\nu$.
Thus,
\begin{equation}
\label{eq4.s22}
E_x\int_0^{\tau_{E_\nu}} e^{-A^\nu_t} \eta(X_t)\,dt=E_x\int_0^{\zeta} e^{-A^\nu_t} \eta(X_t)\,dt,\quad x\in E_\nu.
\end{equation}
From this and (\ref{eq4.s2ab}), \eqref{eq4.s2bc}, we conclude that for $f\in\BB^+(E)$,
\begin{equation}
\label{eq4.s3}
R^\nu f(x)=\int_{E_\nu} G^{E_\nu,\nu}(x,y)f(y)\,m(dy)=R^{E_\nu,\nu}f(x),\quad x\in E_\nu.
\end{equation}
Observe also, that by \eqref{eq4.4} and the symmetry of $G^{E_\nu}, G^{E_\nu,\nu}$, we have that for any positive 
smooth measures $\nu,\beta$,
\begin{equation}
\label{eq4.4najsbfj90639}
R^\nu(R\beta\cdot\nu)=R(R^\nu\beta\cdot \nu),\quad\mbox{on}\quad E_\nu.
\end{equation}

A careful look  at the construction of the process $\mathbb X^\nu$
(see the comments before \cite[Theorem 6.1.1]{FOT}) reveals  that
\[
R^\nu f(x)=\infty\cdot f(x),\quad x\in N_\nu.
\]
We set
\[
\check{R}^\nu f(x)=\mathbf{1}_{E_\nu} R^\nu f(x),\quad x\in E.
\]
By (\ref{eq4.s21}) and Proposition \ref{prop4.1},
\begin{equation}
\label{eq4.s222}
\check{R}^\nu f(x)=E_x\int_0^\zeta e^{-A^\nu_t}f(X_t)\,dt,\quad x\in E.
\end{equation}
Define
\begin{equation}
\label{eq4.s4}
\check{G}^\nu(x,y)= \left\{
\begin{array}{l} G^{E_\nu,\nu}(x,y), \quad
(x,y)\in E_\nu\times E_\nu,\smallskip \\
0, \quad\quad\quad\quad\,\,\quad (x,y)\in (N_\nu\times E)\cup (E\times N_\nu).
\end{array}
\right.
\end{equation}
With the above notation, we have
\[
\check R^\nu f(x)=\int_{E} \check{G}^{\nu}(x,y)f(y)\,m(dy),\quad x\in E.
\]
We can now extend $\check R^\nu$ to an arbitrary positive Borel measure $\mu$ on $E$ by putting
\begin{equation}
\label{eq4.s5}
\check R^\nu \mu (x)=\int_{E} \check{G}^{\nu}(x,y)\,\mu(dy),\quad x\in E.
\end{equation}
Of course, if $\nu$ is a strictly smooth measure, then $R^\nu=\check R^\nu$ and $G^\nu=\check G^\nu$ since $N_\nu=\emptyset$. Observe also that
\begin{equation}
\label{eq4.s55}
\check R^\nu\le R^\nu\le R.
\end{equation}
By analogous arguing to that  following \eqref{eq.efmd12}, we get that for any positive $\mu$, $\check R\mu$
is excessive with respect to $\mathbb X^{E_\nu,\nu}$.

\begin{definition}
We say that $u\in\BB(E)$ is a {\em duality solution} to (\ref{eq1.1}) if for every $\eta\in \BB(E)$ such that $R|\eta|$ is bounded we have
\begin{equation}
\label{eq4.s51}
\langle u,\eta\rangle= \langle \mu, \check R^\nu\eta\rangle.
\end{equation}
\end{definition}

\begin{remark}
\label{rem4.sr}
Observe that both integrals in (\ref{eq4.s51}) are well defined. Indeed, we have
\[
\langle |\mu|,|\check R^\nu\eta|\rangle \le \langle |\mu|,R|\eta|\rangle \le \|\mu\|_{TV}\|R|\eta|\|_\infty.
\]
Moreover,
\[
\langle |u|,|\eta|\rangle= \langle u,\mbox{sgn} (u)|\eta|\rangle=\langle \mu,\check R^\nu(\mbox{sgn}(u)\eta)\rangle\le \|\mu\|_{TV}\|R|\eta|\|_\infty.
\]
Note also  that thanks to the assumption that $(\EE,D(\EE))$ is transient there exists a strictly positive Borel function $\eta$ on $E$ such that $R\eta$ is bounded (see \cite[Corollary 1.3.6]{Oshima}).
\end{remark}

\begin{theorem}
\label{th4.s12}
Assume that $\nu$ is a positive smooth measure on $E$ and $\mu$ is a bounded Borel measure on $E$.
Then there exists a unique duality solution $u$ to \mbox{\rm(\ref{eq1.1})}. Moreover, there exists a quasi-continuous function  $\tilde u$
which is an  $m$-version of $u$ and 
\begin{equation}
\label{eq4.s52}
\tilde u(x)=\int_E \check G^\nu(x,y)\,\mu(dy)\quad \mbox{q.e}.
\end{equation}
%is an $m$-version of $u$ which is  finely continuous on $E_\nu$.
\end{theorem}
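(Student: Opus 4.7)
The plan is to take $\check u$ defined pointwise by \eqref{eq4.s52} and verify the duality identity, uniqueness, and fine continuity in three separate steps. Uniqueness will reduce to a standard test-function argument based on transience; existence will come from Fubini and symmetry of $\check G^\nu$; fine continuity will come from excessiveness plus a careful analysis at the polar set $N_\nu$.

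First, I would observe that $\check G^\nu$ is symmetric: the perturbed form $(\EE^\nu, D(\EE^\nu))$ is symmetric, so the sub-process $\mathbb X^{E_\nu,\nu}$ is $m$-symmetric, $G^{E_\nu,\nu}$ admits a symmetric Borel version, and the extension by zero to $N_\nu$ in \eqref{eq4.s4} preserves symmetry. Fix a strictly positive Borel function $\eta_0$ with $R\eta_0$ bounded, which exists by transience (\cite[Corollary 1.3.6]{Oshima}). Using $\check R^\nu\le R$, symmetry and Fubini,
\[
\int_E \check R^\nu|\mu|(x)\,\eta_0(x)\,m(dx)=\int_E \check R^\nu\eta_0(y)\,|\mu|(dy)\le \|R\eta_0\|_\infty\|\mu\|_{TV}<\infty,
\]
so $\check u$ is $m$-a.e.\ finite and lies in $L^1(E;\eta_0\cdot m)$. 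The same Fubini, applied now with $|\eta|$ in place of $\eta_0$ for any $\eta\in\BB(E)$ with $R|\eta|$ bounded, gives the absolute integrability needed to rearrange
\[
\langle\check u,\eta\rangle=\int_E\!\int_E\check G^\nu(x,y)\,\mu(dy)\,\eta(x)\,m(dx)=\int_E\!\int_E\check G^\nu(y,x)\eta(x)\,m(dx)\,\mu(dy)=\langle\mu,\check R^\nu\eta\rangle,
\]
which establishes existence. For uniqueness, if $u_1,u_2$ are two duality solutions, Remark \ref{rem4.sr} puts both in $L^1(E;\eta_0\cdot m)$; the test function $\eta=\eta_0\cdot\mbox{sgn}(u_1-u_2)$ satisfies $R|\eta|\le R\eta_0<\infty$, so subtracting the two duality identities yields $\int_E|u_1-u_2|\eta_0\,dm=0$, hence $u_1=u_2$ $m$-a.e.

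For fine continuity, I would split $\mu=\mu^+-\mu^-$ and argue for each nonnegative part. On the absorbing set $E_\nu$, $\check R^\nu\mu^+$ coincides with the potential $R^{E_\nu,\nu}\mu^+$ of a bounded measure for the sub-process $\mathbb X^{E_\nu,\nu}$; a classical argument (monotone limits of potentials of bounded functions from below) shows this is excessive for $\mathbb X^{E_\nu,\nu}$, hence finely continuous in its fine topology. Since $E_\nu$ is absorbing and Feynman--Kac multiplicative killing preserves the sample-path fine topology, this is exactly fine continuity for $\mathbb X$ restricted to $E_\nu$. To promote this to fine continuity on all of $E$ at points of $N_\nu$ (where $\check u\equiv 0$), I would check that each section $x\mapsto\check G^\nu(x,y)$ is excessive for the Feynman--Kac semigroup $P^\nu_t$ on $E$ --- trivially zero on $N_\nu$ because $e^{-A^\nu_t}=0$, $t>0$, under $P_x$, $x\in N_\nu$, by Theorem \ref{prop4.1}(iv), and excessive on the absorbing $E_\nu$ as above --- so that $\check u$ itself is $P^\nu_t$-excessive. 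Combining $P^\nu_t$-excessiveness with Theorem \ref{prop4.1}(iv) then gives right-continuity of $\check u(X_\cdot)$ at $t=0$ along $P_{x_0}$-paths for $x_0\in N_\nu$.

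The hard part will be precisely this last step. Because $\check u$ need not be bounded near $N_\nu$ and, for $x_0\in N_\nu$, the Markov process exits instantly to $E_\nu$ (where $\check u$ can be large), the naive path limit of $\check u(X_t)$ as $t\to0^+$ does not obviously equal $0=\check u(x_0)$. Making the $P^\nu_t$-excessive extension of $\check u$ genuinely right-continuous for the underlying $\mathbb X$ at every $x_0\in N_\nu$ is the subtle point, and will require combining the instantaneous explosion $A^\nu_t=\infty,\ t>0$, from Theorem \ref{prop4.1}(iv) with the minimality of the exceptional set established there, so that the support of the relevant potential kernel is pushed off $N_\nu$ strongly enough to force the desired vanishing along the path.
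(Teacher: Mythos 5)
Your existence and uniqueness arguments are essentially the paper's: you define $\check u$ by the kernel formula, use symmetry of $\check G^\nu$ and the bound $\check R^\nu\le R$ to justify a Fubini interchange (this is exactly what the chain $\langle\check R^\nu\mu^\pm,\eta\rangle=\langle\mu^\pm,\check R^\nu\eta\rangle$ in the paper's proof amounts to, with the absolute-integrability estimate already recorded in Remark~\ref{rem4.sr}), and you prove uniqueness by testing against $\mathrm{sgn}(u_1-u_2)\eta_0$ with $\eta_0$ strictly positive and $R\eta_0$ bounded --- the same trick. The only minor difference is that the paper first pins down the $m$-a.e.\ finiteness of $\check R^\nu|\mu|$ by invoking \cite[Proposition 3.2]{K:CVPDE} ($R|\mu|$ finite q.e.), whereas you get $m$-a.e.\ finiteness from the Fubini estimate; both are fine.

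Where you diverge is on fine continuity, and here you are actually more careful than the paper, which asserts fine continuity in the theorem statement but does not prove it in the proof (nor anywhere else; Theorem~\ref{th4.3.dsu}(i) simply refers back to Theorem~\ref{th4.s12}). Your reduction to $E_\nu$ and $N_\nu$ is sound, as is the observation that $\check R^\nu\mu^\pm$ are $P^\nu_t$-excessive and that the difficulty is to turn $P^\nu_t$-excessivity (fine continuity for the killed process $\mathbb X^\nu$) into fine continuity for the unkilled $\mathbb X$ at points of $N_\nu$, where the killing is instantaneous. That concern is legitimate: $P^\nu_t$-excessivity only controls the supermartingale $e^{-A^\nu_t}\check R^\nu\mu^+(X_t)$, which is trivially $0$ for $t>0$ under $P_{x_0}$, $x_0\in N_\nu$, and so by itself says nothing about $\lim_{t\to 0^+}\check R^\nu\mu^+(X_t)$. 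Your proposal identifies the gap but does not close it. The missing ingredient is the \emph{exactness} of the multiplicative functional $m_t=e^{-A^\nu_t}$, which the paper establishes in the proof of Theorem~\ref{prop4.1}(i) and exploits via \cite[Proposition 56.5]{Sharpe}: the content of Sharpe's exact-MF theory is precisely that excessive functions of an exact subprocess are nearly Borel and finely continuous with respect to the \emph{original} process $\mathbb X$, not merely with respect to the killed one. Citing that framework (rather than working only from $P^\nu_t$-excessivity plus Theorem~\ref{prop4.1}(iv)) is what makes the last step go through; as written, your fine-continuity argument at $N_\nu$ is incomplete.
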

\begin{proof}
By \cite[Proposition 3.2]{K:CVPDE}, for every bounded Borel measure $\mu$, $R|\mu|$ is finite q.e.
Since $\check R^\nu|\mu|\le R|\mu|$, we see that  $\check R^\nu|\mu|$ is finite q.e.
Let $\tilde u(x)=\check R^\nu \mu(x)$ for $x\in E$ such that $R|\mu|(x)<\infty$ and $\tilde u(x)=0$ otherwise.
$\check R^\nu\mu^+$ and $\check R^\nu\mu^-$ are excessive functions with respect to $\mathbb X^{E_\nu,\nu}$ (see the comment following \eqref{eq4.s55}), so they are finely continuous with respect to $\mathbb X^{E_\nu,\nu}$ (see Section \ref{sec2.2}).
In particular, by  \eqref{eq4.s2bc}  processes $e^{-A^\nu}\check R^\nu\mu^+(X),\, e^{-A^\nu}\check R^\nu\mu^-(X)$ are right-continuous under measure $P_x$
for every $x\in E_\nu$. Thus, by continuity of $A^\nu$ and the definition of $E_\nu$, $\check R^\nu\mu^+(X),\,\check R^\nu\mu^-(X)$ are right-continuous under measure $P_x$ for every $x\in E_\nu$. By \cite[Theorem 4.6.1]{FOT}, $\check R^\nu\mu^+, \check R^\nu\mu^-$ are quasi-continuous. Since $\tilde u=\check R^\nu\mu^+-\check R^\nu\mu^-$ q.e., we get that $\tilde u$ is quasi-continuous too.
Let $\eta\in \BB(E)$ be a  function such that $R|\eta|$ is bounded. Then
\[
\langle \tilde u,\eta\rangle= \langle \check R^\nu\mu^+,\eta\rangle-\langle \check R^\nu\mu^-,\eta\rangle=
\langle \mu^+, \check R^\nu\eta\rangle-\langle \mu^-,\check R^\nu\eta\rangle=\langle \mu,\check R^\nu\eta\rangle.
\]
Therefore $\tilde u$ is a duality solution to (\ref{eq1.1}) and of course (\ref{eq4.s52}) is satisfied.
Now, suppose that $u,w$ are duality solutions to (\ref{eq1.1}). Let $\eta$ be a strictly positive Borel function  on $E$ such that $R\eta$ is bounded. Then, by (\ref{eq4.s51}),
$\langle u-w,\mbox{sgn}(u-w)\eta\rangle=0$.
This implies that $\langle |u-w|,\eta\rangle=0$, hence that  $u=w$ $m$-a.e.
\end{proof}

To show that for $A$ defined by (\ref{eq1.7}) duality solutions
coincide with solutions defined by (\ref{eq4.s1}) we will need the
following proposition which is a generalization of \cite[Theorem 5.2]{MO}.

\begin{proposition}
\label{eq4.prop.s3} For every  $\eta\in\BB^+(E)$ such that $R\eta$
is bounded  there exists a unique positive smooth measure
$\gamma_\eta$ such that
\[
\check R^\nu\eta+R\gamma_\eta=R\eta.
\]
Moreover, $\gamma_\eta=(R^\nu\eta)\cdot \nu$.
\end{proposition}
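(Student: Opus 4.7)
The plan is to take $\gamma_\eta := (R^\nu\eta)\cdot\nu$ and verify both its smoothness and the identity $\check R^\nu\eta + R\gamma_\eta = R\eta$. Smoothness is immediate: on $E_\nu$ we have $R^\nu\eta = \check R^\nu\eta \le R\eta$, which is bounded by assumption, while $\nu(N_\nu)=0$ since $N_\nu$ is polar, so $\gamma_\eta \le \|R\eta\|_\infty\cdot\nu$ and smoothness of $\gamma_\eta$ is inherited from $\nu$ (both axioms transfer through the bounded density).

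For the identity on $E_\nu$, the main computation is to show that $E_x\int_0^\zeta \check R^\nu\eta(X_s)\,dA^\nu_s = R\eta(x) - \check R^\nu\eta(x)$, and then to identify the left-hand side with $R\gamma_\eta(x)$ via Revuz duality. I would apply the Markov property to $\check R^\nu\eta(X_s)$: together with the additivity $A^\nu_{s+t}=A^\nu_s+A^\nu_t\circ\theta_s$ it gives $\check R^\nu\eta(X_s) = e^{A^\nu_s}\,E_x[\int_s^\zeta e^{-A^\nu_u}\eta(X_u)\,du\mid\FF_s]$. Substituting, removing the conditional expectation (legitimate since $dA^\nu$ is adapted), switching the order of integration, and using $\int_0^u e^{A^\nu_s}\,dA^\nu_s = e^{A^\nu_u}-1$ reduces the target to
\[
E_x\int_0^\zeta\eta(X_u)(1-e^{-A^\nu_u})\,du = R\eta(x) - \check R^\nu\eta(x).
\]
Revuz duality (\ref{eq4.1}) applied to the bounded positive Borel function $\check R^\nu\eta$ rewrites $E_x\int_0^\zeta \check R^\nu\eta(X_s)\,dA^\nu_s$ as $\int_E G(x,y)\check R^\nu\eta(y)\,\nu(dy) = R\gamma_\eta(x)$, where replacing $\check R^\nu\eta$ by $R^\nu\eta$ in the density is harmless because $\nu(N_\nu)=0$.

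To extend the identity to $x\in N_\nu$ I would use fine continuity: $R\eta$ and $R\gamma_\eta$ are potentials of positive smooth measures, hence excessive and finely continuous, and $\check R^\nu\eta$ is finely continuous by the same exact multiplicative functional argument used in the proof of Theorem~\ref{prop4.1} (cf.\ \cite[Proposition 56.5]{Sharpe}) applied to $M_t = e^{-A^\nu_t}$. Since $N_\nu$ is polar it is finely closed, $E_\nu$ is finely open and finely dense in $E$, and the pointwise identity propagates from $E_\nu$ to all of $E$. Uniqueness of $\gamma_\eta$ is routine: $R\gamma = R\gamma'$ for two positive smooth measures forces the associated PCAFs to coincide by Revuz duality, hence $\gamma = \gamma'$. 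The most delicate points are the manipulation of $e^{-A^\nu_\cdot}$ via the Markov property in the middle step and the fine continuity extension across the polar set $N_\nu$, where the probabilistic structure of $A^\nu$ (and not merely its Revuz measure) is what makes the identity hold pointwise rather than only quasi-everywhere.
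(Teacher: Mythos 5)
Your proof is correct, and it takes a genuinely different route from the paper's. The paper establishes the identity by invoking the Green-function form of the resolvent equation for the killed process on $E_\nu$, namely $G^{E_\nu,\nu}(x,y)=G^{E_\nu}(x,y)-\int_{E_\nu}G^{E_\nu}(x,z)G^{E_\nu,\nu}(z,y)\,\nu(dz)$ from \cite[Exercise 6.1.1]{FOT}, integrating against $\eta(y)\,m(dy)$, and identifying the three resulting terms with $\check R^\nu\eta$, $R\eta$ and $R\gamma_\eta$ via (\ref{eq3.s1}), (\ref{eq3.s2}), (\ref{eq4.s2}) and $\nu(N_\nu)=0$. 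You instead carry out the Feynman--Kac computation directly: you expand $\check R^\nu\eta(X_s)$ by the Markov property together with the additivity $A^\nu_{s+t}=A^\nu_s+A^\nu_t\circ\theta_s$, remove the conditional expectation, interchange the order of integration, and apply $\int_0^u e^{A^\nu_s}\,dA^\nu_s=e^{A^\nu_u}-1$, which collapses the expression to $R\eta-\check R^\nu\eta$; the left-hand side is then recognized as $R((R^\nu\eta)\cdot\nu)$ through the Revuz formula (\ref{eq4.1}) and $\nu(N_\nu)=0$. Both proofs then propagate the pointwise identity from $E_\nu$ to $E$ by essentially the same mechanism ($m$-a.e.\ equality of finely continuous functions under the absolute continuity condition); the paper phrases this through excessivity of the potentials, you through fine density of $E_\nu$. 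Your version is more self-contained, needing only Theorem~\ref{prop4.1} rather than the cited resolvent identity, and it makes transparent exactly where the density $R^\nu\eta$ in $\gamma_\eta$ comes from. The paper's version is shorter given its reference and sidesteps the one step in your argument that deserves a slightly more careful justification: dropping the conditional expectation under the Stieltjes integral $dA^\nu_s$ is an application of the optional/predictable projection theorem for the predictable increasing process $\int_0^{\cdot} e^{A^\nu_s}\,dA^\nu_s$, not merely a consequence of $A^\nu$ being adapted.
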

\begin{proof}
%By Theorem \ref{prop4.1}, $A^\nu$ is a strict PCAF of $\mathbb X^{E_\nu}$. 
Multiplying both sides of \eqref{eq4.4} by $\eta$  and
integrating with respect to $y$ over $E_\nu$ yields
\[
R^{E_\nu,\nu}\eta(x)=R^{E_\nu}\eta(x)-R^{E_\nu}((R^{E_\nu,\nu}
\eta)\cdot\nu)(x),\quad x\in E_\nu.
\]
By (\ref{eq4.s5}), $R^{E_\nu,\nu}\eta(x)=\check R^\nu\eta(x),\, x\in E_\nu$. By (\ref{eq3.s1}), $R^{E_\nu}\eta(x)=R\eta(x),\, x\in E_\nu$. By (\ref{eq3.s2}) and (\ref{eq4.s3}),
\begin{align*}
R^{E_\nu}((R^{E_\nu,\nu}\eta)\cdot\nu)(x)
&=\int_{E_\nu} R^{E_\nu,\nu}\eta(y)G^{E_\nu}(x,y)\,\nu(dy)\\
&=\int_{E_\nu}R^\nu\eta(y)G(x,y)\,\nu(dy)\\
&=R(\mathbf{1}_{E_\nu}(R^\nu\eta)\cdot\nu)(x)
=R((R^\nu\eta)\cdot\nu)(x),\quad x\in E_\nu.
\end{align*}
In the last equation we have used the fact that $\nu$ is smooth,
which implies that $\nu(N_\nu)=0$. By the above equations,
\[
\check R^\nu\eta(x)+R\gamma_\eta(x)=R\eta(x),\quad x\in E_\nu,
\]
with $\gamma_\eta= R^\nu\eta\cdot\nu$. Since $\check R^\nu\eta, R\gamma_\eta, R\eta$
are finely continuous (see Proposition \ref{prop4.1}(ii)) and $N_\nu$ is polar, we get the desired result.
\end{proof}

\begin{corollary}
Let  $a$ be a symmetric matrix-valued bounded Borel measurable function on a bounded domain $D$ such that $\lambda I\le a$ for some $\lambda>0$, and let $A$ be defined by \mbox{\rm(\ref{eq1.7})}.
Then $u$ is a duality solution to \mbox{\rm(\ref{eq1.1})} if and only if $u$ satisfies \mbox{\rm(\ref{eq4.s1})}.
\end{corollary}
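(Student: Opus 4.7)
The plan is to reduce the equivalence of the two definitions to the pointwise identity $\check R^\nu\eta(x)=\hat\zeta_\eta(x)$ on $D$ for bounded test functions $\eta$, and then appeal to uniqueness of duality solutions.

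First I would address the compatibility of test function classes. For $A$ defined by (\ref{eq1.7}) on a bounded domain $D$, the function $R\mathbf{1}(x)=E_x\tau_D$ is bounded (since $D$ is bounded and $A$ uniformly elliptic), hence $R|\eta|\in L^\infty(D)$ for every $\eta\in L^\infty(D;m)$. Thus the Malusa--Orsina test class $L^\infty(D;m)$ is contained in the paper's test class $\{\eta\in\BB(E):R|\eta|\text{ bounded}\}$. Moreover $L^\infty(D;m)$ contains a strictly positive function, so by the argument used in the proof of Theorem \ref{th4.s12} the identity (\ref{eq4.s51}) restricted to $\eta\in L^\infty(D;m)$ already determines $u$ uniquely in $L^1(D;m)$.

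Next I would establish the pointwise identity $\check R^\nu\eta=\hat\zeta_\eta$ on $D$ for every $\eta\in L^\infty(D;m)$. By Proposition \ref{eq4.prop.s3},
\[
\check R^\nu\eta=R\eta-R\gamma_\eta,\qquad \gamma_\eta=(R^\nu\eta)\cdot\nu,
\]
with $\gamma_\eta$ a positive smooth measure. For $A$ given by (\ref{eq1.7}), classical elliptic regularity implies that $R\eta$ is continuous on $D$ for $\eta\in L^\infty(D;m)$, and the Green potential $R\gamma_\eta(x)=\int_D G(x,y)\,\gamma_\eta(dy)$ of a positive smooth measure is a classical $-A$-superharmonic function on $D$. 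It is a standard fact of elliptic potential theory that such superharmonic Green potentials admit a canonical pointwise representative satisfying
\[
R\gamma_\eta(x)=\lim_{r\to 0^+}\frac{1}{|B(x,r)|}\int_{B(x,r)}R\gamma_\eta(y)\,dy
\]
at every point $x\in D$ (the limit exists in $[0,\infty]$, is the finely continuous representative, and agrees with the representative given by (\ref{eq4.s5})). Splitting $\eta=\eta^+-\eta^-$ and combining the two Lebesgue limits gives
\[
\check R^\nu\eta(x)=\lim_{r\to 0^+}\frac{1}{|B(x,r)|}\int_{B(x,r)}\check R^\nu\eta(y)\,dy=\hat\zeta_\eta(x),\quad x\in D,
\]
since $\check R^\nu\eta$ is an $m$-version of $R^\nu\eta=\zeta_\eta$.

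Finally, the equivalence follows by uniqueness. The paper's duality solution $\check u$ exists and is unique by Theorem \ref{th4.s12}, and by the pointwise identity above, $\langle\check u,\eta\rangle=\langle\mu,\check R^\nu\eta\rangle=\langle\mu,\hat\zeta_\eta\rangle$ for every $\eta\in L^\infty(D;m)$, so $\check u$ satisfies (\ref{eq4.s1}). Conversely, any $u\in L^1(D;m)$ satisfying (\ref{eq4.s1}) satisfies $\langle u,\eta\rangle=\langle\mu,\check R^\nu\eta\rangle$ for every $\eta\in L^\infty(D;m)$, and the uniqueness observation of the first paragraph forces $u=\check u$ $m$-a.e., so $u$ is a duality solution in the sense of this paper. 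The main obstacle will be the pointwise identification in the second paragraph: when $\nu$ is a smooth measure that fails to be Radon, so is $\gamma_\eta$, and the classical superharmonic Lebesgue-point identity does not apply off-the-shelf. I would handle this by monotone approximation, writing $\nu$ as an increasing limit of strictly smooth measures $\nu_n$ (as in Theorem \ref{prop4.1}), so that $\gamma_{\eta,n}:=(R^{\nu_n}\eta)\cdot\nu_n$ is a Radon measure for which the classical Lebesgue-point identity holds, and then passing to the limit using the monotonicity of $R^{\nu_n}\eta\nearrow R^\nu\eta$, the monotone convergence theorem for the averaging, and the convergence $\check R^{\nu_n}\eta\to\check R^\nu\eta$ derived from (\ref{eq4.s222}).
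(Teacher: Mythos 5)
The paper's own proof of this corollary is a one-line citation: it follows from Proposition \ref{eq4.prop.s3} together with \cite[Theorem 5.2]{MO}. Your proposal instead tries to re-derive the content of MO's Theorem 5.2 from scratch, using Proposition \ref{eq4.prop.s3} as the bridge between $\check R^\nu\eta$ and $\hat\zeta_\eta$. That bridge is the right idea and matches what the paper does; the difference is that you want a self-contained argument where the paper outsources the analytic heart of the matter to Malusa--Orsina.

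There are two substantive issues with the self-contained route. First, the claim that ``superharmonic Green potentials admit a canonical pointwise representative satisfying $R\gamma_\eta(x)=\lim_{r\to 0^+}\fint_{B(x,r)}R\gamma_\eta\,dm$ at every point'' is routine for the Laplacian (where the exact sub-mean-value inequality is available) but is \emph{not} an off-the-shelf fact for divergence-form operators with merely bounded measurable coefficients: there the mean-value inequality holds only up to Moser-type multiplicative constants, and the Lebesgue-point identification at every point is precisely what Malusa--Orsina establish (building on Littman--Stampacchia--Weinberger and the Wiener criterion). Invoking it as a ``standard fact of elliptic potential theory'' without a reference replaces one citation by another unstated one, so you haven't actually gained independence from \cite{MO}.

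Second, the monotone-approximation fallback for non-Radon $\nu$ does not close the gap. You write $R^{\nu_n}\eta\nearrow R^\nu\eta$, but as $\nu_n\nearrow\nu$ the absorption increases, so in fact $R^{\nu_n}\eta\searrow\check R^\nu\eta$ and correspondingly $R\gamma_{\eta,n}\nearrow R\gamma_\eta$. With the directions fixed, monotonicity plus the Radon-case Lebesgue property gives you the lower bound $\liminf_{r\to 0}\fint_{B(x,r)}R\gamma_\eta\,dm\ge R\gamma_\eta(x)$ for free, but the matching upper bound $\limsup_{r\to 0}\fint_{B(x,r)}R\gamma_\eta\,dm\le R\gamma_\eta(x)$ does \emph{not} follow: for that you would need either the exact sub-mean-value inequality for $L$-superharmonic functions (unavailable for measurable $a$) or some other uniform control, and ``the monotone convergence theorem for the averaging'' does not license swapping $\lim_{r\to 0}$ with $\lim_{n\to\infty}$. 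So the approximation argument, as stated, only establishes one inequality. The paper's proof avoids all of this by quoting \cite[Theorem 5.2]{MO}, where these issues are resolved once and for all.
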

\begin{proof}
Follows from Proposition \ref{eq4.prop.s3} and \cite[Theorem 5.2]{MO}.
\end{proof}

\subsection{Duality solutions vs. variational solutions
and stability results}
\label{sec4.2}

We start with some stability results for duality solutions.

\begin{proposition}
\label{prop4.s1} Let $\nu$ be a positive smooth measure on $E$ and
$\{\nu_n\}$ be a sequence of  positive strictly smooth measures on
$E$ such that $\nu_n\nearrow \nu$. Then for every $\eta\in
\BB^+(E)$ such that $R\eta$ is bounded,
\[
R^{\nu_n}\eta(x)\rightarrow \check R^\nu\eta(x),\quad x\in E.
\]
\end{proposition}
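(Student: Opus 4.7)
The plan is to reduce everything to a direct application of the dominated convergence theorem in the probabilistic representation of the two operators. First, I would translate both sides of the claimed convergence into expectations under the Hunt process $\BX$. Since each $\nu_n$ is strictly smooth, $N_{\nu_n}=\emptyset$ and $E_{\nu_n}=E$, so the representation (\ref{eq4.s21}) gives
\[
R^{\nu_n}\eta(x) = E_x \int_0^\zeta e^{-A^n_t}\eta(X_t)\,dt, \quad x \in E,
\]
where $A^n$ is the strict PCAF of $\BX$ in Revuz duality with $\nu_n$. On the other hand, by (\ref{eq4.s222}),
\[
\check R^\nu \eta(x) = E_x \int_0^\zeta e^{-A^\nu_t}\eta(X_t)\,dt, \quad x \in E.
\]

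Next I would invoke Theorem \ref{prop4.1}(ii), which says that the PCAF $A^\nu$ is defined precisely as $A^\nu_t=\sup_n A^n_t$. Hence $A^n_t \nearrow A^\nu_t$ a.s., and therefore $e^{-A^n_t}\eta(X_t) \searrow e^{-A^\nu_t}\eta(X_t)$ a.s.\ for each $t$. The natural dominating function is $\eta(X_t)$, and the standing hypothesis gives
\[
E_x \int_0^\zeta \eta(X_t)\,dt = R\eta(x) \le \|R\eta\|_\infty < \infty
\]
for every $x\in E$. Thus dominated convergence yields
\[
R^{\nu_n}\eta(x) \longrightarrow E_x \int_0^\zeta e^{-A^\nu_t}\eta(X_t)\,dt = \check R^\nu \eta(x), \quad x \in E.
\]

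The only point one should double-check is the behaviour on the exceptional set $N_\nu$, which is where the difference between $R^\nu$ and $\check R^\nu$ lives. For $x\in N_\nu$, Theorem \ref{prop4.1}(iv) gives $P_x(A^\nu_t=\infty,\, t>0)=1$, so $e^{-A^\nu_t}\eta(X_t)=0$ for $t>0$ almost surely under $P_x$; the limit of the right-hand side is therefore $0$, which coincides with the convention $\check R^\nu\eta(x)=0$ on $N_\nu$ built into (\ref{eq4.s4}). In fact there is no genuine obstacle here: once the probabilistic representations are in place and one notes that $A^n\nearrow A^\nu$, the result is a one-line application of dominated convergence. The only thing to emphasize is that the representation (\ref{eq4.s21}) is valid on \emph{all} of $E$ for strictly smooth $\nu_n$, which is precisely why the statement is formulated with approximating strictly smooth measures rather than arbitrary smooth ones.
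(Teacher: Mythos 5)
Your proof is correct and takes essentially the same route as the paper: both pass through the probabilistic representations $R^{\nu_n}\eta(x)=E_x\int_0^\zeta e^{-A^{\nu_n}_t}\eta(X_t)\,dt$ and $\check R^{\nu}\eta(x)=E_x\int_0^\zeta e^{-A^{\nu}_t}\eta(X_t)\,dt$, use Theorem \ref{prop4.1} to get $A^{\nu_n}_t\nearrow A^\nu_t$ $P_x$-a.s.\ for every $x\in E$, and then pass to the limit by dominated convergence with the dominant $\eta(X_t)$ controlled by $\|R\eta\|_\infty<\infty$. You are somewhat more explicit than the paper in spelling out the domination and the behaviour on $N_\nu$, and you invoke (\ref{eq4.s222}) directly instead of re-deriving the identification of the limit via (\ref{eq4.s2}) and (\ref{eq4.s22}), but the substance is identical.
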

\begin{proof}
By (\ref{eq4.s21}), for any $\eta$ as in the proposition, we have
\[
R^{\nu_n}\eta(x)=E_x\int_0^\zeta e^{-A^{\nu_n}_t}\eta(X_t)\,dt,\quad x\in E.
\]
By our assumptions and Proposition \ref{prop4.1},  $A^{\nu_n}_t\nearrow A^\nu_t,\, t\ge 0$, $P_x$-a.s. for every $x\in E$. Hence
\[
E_x\int_0^\zeta e^{-A^{\nu_n}_t}\eta(X_t)\,dt\rightarrow E_x\int_0^\zeta e^{-A^{\nu}_t}\eta(X_t)\,dt,\quad x\in E.
\]
Therefore, by \eqref{eq4.s222}, we get the result.
%By Theorem \ref{prop4.1}, $P_x(A^\nu_t=\infty,\, t>0)=1$, $x\in N_\nu$, so $E_x\int_0^\zeta e^{-A^{\nu}_t}\eta(X_t)\,dt=0,\, x\in N_\nu$. If $x\in E_\nu$, then $\zeta=\tau_{E_\nu}$ since $N_\nu$ is polar.
%Hence, by (\ref{eq4.s2ab}) and (\ref{eq4.s22}),
%\[
%E_x\int_0^\zeta e^{-A^\nu_t}\eta(X_t)\,dt=\int_E G^{E_\nu,\nu}(x,y)\eta(y)\,m(dy),\quad x\in E_\nu,
%\]
%which completes the proof.
\end{proof}

\begin{proposition}
\label{prop4.s1s}
Let $\nu$ be a positive smooth measure on $E$ and $\{\nu_n\}$ be a sequence of positive strictly smooth measures on $E$ such that $\nu_n\nearrow \nu$.
Let $u$ be a duality solution to \mbox{\rm(\ref{eq1.1})} and $u_n$ be a duality solution to \mbox{\rm(\ref{eq1.1})}  with $\nu$ replaced by $\nu_n$. Then, for every  $\rho\in \BB^+(E)$ such that $R\rho$ is bounded, $u_n\rightarrow u$ in $L^1(E;\rho\cdot m)$.
\end{proposition}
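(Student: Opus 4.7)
The plan is to pass to the canonical finely continuous $m$-versions $\check u_n$, $\check u$ given by Theorem \ref{th4.s12}, reduce $\|\check u_n-\check u\|_{L^1(E;\rho\cdot m)}$ to a quantity involving only the kernels applied to $\rho$, and then invoke Proposition \ref{prop4.s1} together with dominated convergence.

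First I would use that $\nu_n\le\nu$ to obtain (as already exploited in the proof of Proposition \ref{prop4.s1}) that $A^{\nu_n}_t\le A^\nu_t$ for $t<\zeta$, $P_x$-a.s. for every $x\in E$. In view of (\ref{eq4.s222}) this yields $\check R^{\nu_n}\eta\ge \check R^\nu\eta$ for every $\eta\in\BB^+(E)$, and hence $\check G^{\nu_n}(x,y)\ge\check G^\nu(x,y)$ (as $m$-densities of excessive measures these inequalities are pointwise). For $\nu_n$ strictly smooth one has $\check R^{\nu_n}=R^{\nu_n}$. Decomposing $\mu=\mu^+-\mu^-$ and using Theorem \ref{th4.s12} yields the pointwise $m$-a.e. bound
\[
|\check u_n(x)-\check u(x)|\le \check R^{\nu_n}|\mu|(x)-\check R^{\nu}|\mu|(x).
\]

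Next I would multiply by $\rho$, integrate against $m$, and apply Fubini:
\[
\int_E|\check u_n-\check u|\,\rho\,dm\le\int_E\!\!\int_E\bigl(\check G^{\nu_n}(x,y)-\check G^{\nu}(x,y)\bigr)\rho(x)\,m(dx)\,|\mu|(dy).
\]
Here I would invoke symmetry of the Green kernels: the forms $(\EE^{\nu_n},D(\EE^{\nu_n}))$ and $(\EE^\nu,D(\EE^\nu))$ are symmetric, so $G^{\nu_n}$ is symmetric on $E\times E$ and $G^{E_\nu,\nu}$ is symmetric on $E_\nu\times E_\nu$; the extension (\ref{eq4.s4}) then makes $\check G^{\nu}$ symmetric on $E\times E$. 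Consequently the inner integral equals $\check R^{\nu_n}\rho(y)-\check R^{\nu}\rho(y)$, and we arrive at
\[
\int_E|\check u_n-\check u|\,\rho\,dm\le\int_E\bigl(\check R^{\nu_n}\rho-\check R^\nu\rho\bigr)\,d|\mu|.
\]
Finally, Proposition \ref{prop4.s1} gives $\check R^{\nu_n}\rho(y)\to\check R^\nu\rho(y)$ for every $y\in E$ (monotonically from above), and by (\ref{eq4.s55}) this sequence is dominated by the bounded function $R\rho$. Since $|\mu|$ is a finite Borel measure, dominated convergence closes the argument.

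The step that will require the most care is the symmetry of $\check G^\nu$. Symmetry of $G^{\nu_n}$ is immediate from self-adjointness of $A^{\nu_n}$ on $L^2(E;m)$; for $\check G^\nu$ one has to argue that the part process $\mathbb X^{E_\nu,\nu}$ is $m_{|E_\nu}$-symmetric, which follows because $(\EE^\nu,D(\EE^\nu))$ is symmetric and $N_\nu$ is polar (so that the restriction of the form to $E_\nu$ remains symmetric and regular), and that the extension by $0$ on $N_\nu$ in (\ref{eq4.s4}) preserves this symmetry on $E\times E$.
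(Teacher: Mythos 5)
Your proof takes a genuinely different and more direct route than the paper's. The paper's proof extracts an $m$-a.e.\ convergent subsequence of excessive functions via the Dellacherie--Meyer lemma, passes to the limit by dominated convergence, identifies the limit as a duality solution using Proposition \ref{prop4.s1}, and then upgrades to full-sequence convergence by uniqueness. You instead aim for a direct quantitative bound $\int_E|u_n-u|\rho\,dm\le\int_E(\check R^{\nu_n}\rho-\check R^\nu\rho)\,d|\mu|$ and conclude by Proposition \ref{prop4.s1} together with dominated (or monotone) convergence. This removes the compactness step and the subsequence bookkeeping entirely, at the modest cost of a Fubini argument.

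There is, however, one soft spot in the way you derive the bound. You invoke the pointwise inequality $\check G^{\nu_n}(x,y)\ge\check G^\nu(x,y)$, justified by appealing to excessivity. What actually follows from $A^{\nu_n}\le A^\nu$ and (\ref{eq4.s222}) is $\check R^{\nu_n}\eta(x)\ge\check R^\nu\eta(x)$ for $\eta\in\BB^+(E)$, i.e.\ the kernel inequality only $m(dy)$-a.e.\ for each fixed $x$; the comparison principle recalled in Section \ref{sec2.2} applies to two excessive functions for the \emph{same} process, whereas $y\mapsto\check G^{\nu_n}(x,y)$ and $y\mapsto\check G^\nu(x,y)$ are excessive with respect to different perturbed processes. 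Since $\mu$ may charge $m$-null sets, this gap propagates into the claimed pointwise bound $|\check u_n-\check u|\le\check R^{\nu_n}|\mu|-\check R^\nu|\mu|$. Fortunately, both this issue and the Green-function symmetry step you flagged as delicate can be bypassed. Take a Borel version of $\eta:=\mathrm{sgn}(u_n-u)\rho$, which is an admissible test function since $R|\eta|\le R\rho$ is bounded. Then, by the definition of duality solution (and $\check R^{\nu_n}=R^{\nu_n}$ for strict $\nu_n$),
\[
\int_E|u_n-u|\rho\,dm=\langle u_n-u,\eta\rangle=\langle\mu,\check R^{\nu_n}\eta-\check R^\nu\eta\rangle,
\]
and (\ref{eq4.s222}) together with $A^{\nu_n}_t\le A^\nu_t$ gives, for every $x\in E$,
\[
|\check R^{\nu_n}\eta(x)-\check R^\nu\eta(x)|\le E_x\int_0^\zeta\bigl(e^{-A^{\nu_n}_t}-e^{-A^\nu_t}\bigr)\rho(X_t)\,dt=\check R^{\nu_n}\rho(x)-\check R^\nu\rho(x).
\]
This yields exactly the bound you want using only function-level positivity of the difference of resolvent kernels, with no appeal to symmetry of $\check G^\nu$ or to pointwise comparability of the kernels. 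The rest of your argument (pointwise monotone convergence of $\check R^{\nu_n}\rho$ to $\check R^\nu\rho$ by Proposition \ref{prop4.s1}, domination by the bounded function $R\rho$, and finiteness of $|\mu|$) then closes the proof.
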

\begin{proof}
Set $u_n^{\oplus}= R^{\nu_n}\mu^+,\, u_n^{\ominus}= R^{\nu_n}\mu^-$.
Then $u_n^{\oplus}, u_n^{\ominus}$ are  excessive functions with respect to $\mathbb X^{\nu_n}$. Therefore, by (\ref{eq4.s21}),
\[
\alpha R^\nu_\alpha u_n^{\oplus}(x)\le \alpha R_\alpha^{\nu_n} u_n^{\oplus}(x)\le u_n^{\oplus}(x),\quad x\in E_\nu.
\]
This implies that $\{u_n^{\oplus}\}$ is a sequence of  excessive functions  with respect to $\mathbb X^{\nu}$.
Analogously, we get that $\{u_n^{\ominus}\}$ is also  a sequence of  excessive functions  with respect to $\mathbb X^{\nu}$.
Since there exists the Green function for $\mathbb X^{E_\nu,\nu}$, by \cite[Lemma 94, page 306]{DellacherieMeyer}, there exists a subsequence (still denoted by $(n)$) such that $\{u_n^{\oplus}\}, \{u_n^{\ominus}\}$ are convergent $m$-a.e.  Observe that
\[
|u_n|\le R|\mu|,\quad m\mbox{-a.e.}
\]
Hence, by the Lebesgue dominated convergence theorem, there exists $u\in L^1(E;\rho\cdot m)$ such that $\{u_n\}$  converges to $u$ in $L^1(E;\rho\cdot m)$. As a consequence,   for every $\eta\in \BB(E)$ such that $R|\eta|$ is bounded,
\[
\langle u_n,\eta\rangle\rightarrow \langle u,\eta\rangle.
\]
By Proposition \ref{prop4.s1},
\[
\langle \mu, R^{\nu_n}\eta\rangle \rightarrow \langle \mu,\check R^\nu\eta\rangle.
\]
From these two convergences, we conclude that $u$ is a duality solution to (\ref{eq1.1}).
Applying a  uniqueness argument  (see Theorem \ref{th4.s12}) shows that in fact the whole sequence $\{u_n\}$ converges  to $u$ in $L^1(E;\rho\cdot m)$.
\end{proof}

%\begin{corollary}
%Let $\nu$ be a positive smooth measure on $E$. Let $\{\nu_n\}$ be a sequence of positive strict smooth measures on $E$ such that $\nu_n\nearrow \nu$. Then the assertions of Proposition \ref{prop4.s1} and  Proposition \ref{prop4.s1s} hold.
%\end{corollary}

Since in the paper we assume that $(\EE,D(\EE))$ is transient, $(D_e(\EE),\EE)$
is a Hilbert space.
In what follows we denote by  $D'_e(\EE)$ the dual space of $D_e(\EE)$.
Let $\mu$ be a bounded Borel measure on $E$. We write $\mu\in D'_e(\EE)$ if
\[
\langle \mu,\eta\rangle\le c\|\eta\|_{D_e(\EE)},\quad \eta\in C_b(E)\cap D_e(\EE).
\]
By \cite[Proposition 3.1]{KR:BPAN}, each bounded Borel measure $\mu\in D'_e(\EE)$ is a smooth measure. Moreover, for every bounded $\eta\in D_e(\EE)$,
\[
\langle \mu,\tilde \eta\rangle\le c\|\eta\|_{D_e(\EE)}.
\]
By the Hahn-Banach theorem,  $\mu$ can be uniquely extended to $D_e(\EE)$
as a continuous linear functional. We denote this extension again by $\mu$. With this notation,  for every bounded $\eta\in D_e(\EE)$,
\begin{equation}
\label{eq4.s5352}
\langle\mu,\eta\rangle_{D'_e(\EE),D_e(\EE)}=\int_E\tilde\eta\,d\mu.
\end{equation}

\begin{definition}
Let $\nu$ be a positive smooth measure and $\mu$ be a bounded Borel measure in $D'_e(\EE)$.
We say that $u\in D_e(\EE)\cap L^2(E;\nu)$ is a {\em variational solution} to (\ref{eq1.1}) if
\[
\EE(u,\eta)+\langle \nu,\tilde u \tilde\eta\rangle=\langle \mu,\tilde \eta\rangle_{D'_e(\EE),D_e(\EE)},\quad \eta\in D_e(\EE)\cap L^2(E;\nu).
\]
\end{definition}

\begin{remark}
Since $D_e(\EE^\nu)\subset D_e(\EE)$, the existence and uniqueness of a variational solution to (\ref{eq1.1})
follows easily from the  Lax-Milgram theorem.
\end{remark}

\begin{theorem}
Let $\nu$ be a positive smooth measure on $E$ and $\mu$ be a bounded
Borel measure on $E$ such that $\mu\in D'_e(\EE)$. Then $u$ is  a
variational solution to \mbox{\rm(\ref{eq1.1})} if and only if it
is a duality solution to \mbox{\rm(\ref{eq1.1})}.
\end{theorem}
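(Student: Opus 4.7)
The plan is to combine two uniqueness results with one direct implication. A variational solution exists and is unique by Lax-Milgram (see the Remark before the theorem), while a duality solution exists and is unique by Theorem \ref{th4.s12}. Hence it suffices to show that every variational solution $u$ is a duality solution; the converse is automatic since then $u_{\text{var}}$ and $u_{\text{dual}}$ satisfy the same duality identity, forcing them to coincide $m$-a.e., so $u_{\text{dual}}\in D_e(\EE)\cap L^2(E;\nu)$ and satisfies the variational identity.

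The key technical step is to establish, for every $\eta\in\BB(E)$ with $R|\eta|$ bounded, that $\phi:=\check R^\nu\eta$ is an admissible test function in the variational formulation and that it is the ``$R^\nu$-potential'' in the sense that
\begin{equation}
\label{eq:plan-key}
\EE(\phi,v)+\langle\nu,\tilde\phi\,\tilde v\rangle=\int_E v\,\eta\,dm,\quad v\in D_e(\EE)\cap L^2(E;\nu).
\end{equation}
For this I would use Proposition \ref{eq4.prop.s3}, which (for positive $\eta$, and then extended by linearity) gives $\check R^\nu\eta+R\gamma_\eta=R\eta$ with $\gamma_\eta=(R^\nu\eta)\cdot\nu$ a positive smooth measure whose potential $R\gamma_\eta$ is bounded (by $R|\eta|$). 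Standard transient Dirichlet form theory then yields $R\eta,R\gamma_\eta\in D_e(\EE)$ with $\EE(R\eta,v)=\int v\eta\,dm$ and $\EE(R\gamma_\eta,v)=\langle\gamma_\eta,\tilde v\rangle=\langle\nu,\widetilde{R^\nu\eta}\,\tilde v\rangle$. Subtracting gives $\check R^\nu\eta\in D_e(\EE)$ with
\[
\EE(\check R^\nu\eta,v)=\int_E v\eta\,dm-\langle\nu,\widetilde{R^\nu\eta}\,\tilde v\rangle,
\]
and since $\check R^\nu\eta=R^\nu\eta$ on $E_\nu$ and $\nu(N_\nu)=0$, the correction term coincides with $\langle\nu,\widetilde{\check R^\nu\eta}\,\tilde v\rangle$, proving \eqref{eq:plan-key}. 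Along the way I would check that $\check R^\nu\eta$ is finely continuous (from the representation \eqref{eq4.s222} via the Hunt process $\mathbb X$ and the fine continuity statements in Theorem \ref{prop4.1}), hence is its own quasi-continuous $m$-version, so that $\widetilde{\check R^\nu\eta}=\check R^\nu\eta$ q.e. Also, $\check R^\nu\eta\in L^2(E;\nu)$ because $R^\nu\eta$ is bounded and $(R^\nu\eta)\cdot\nu=\gamma_\eta$ has bounded potential, hence finite total mass on compact nests, and in fact integrating $|\check R^\nu\eta|^2\,d\nu\le\|R|\eta|\|_\infty\,\langle\nu,|\check R^\nu\eta|\rangle\le\|R|\eta|\|_\infty\,\|\gamma_\eta\|_{\text{TV}}$ will do.

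With \eqref{eq:plan-key} in hand, the argument is immediate: given $u$ variational and $\eta$ with $R|\eta|$ bounded, take $v=u$ in \eqref{eq:plan-key} to get
\[
\EE(\phi,u)+\langle\nu,\tilde\phi\,\tilde u\rangle=\langle u,\eta\rangle.
\]
On the other hand, $\phi\in D_e(\EE)\cap L^2(E;\nu)$ is an admissible test function in the variational identity for $u$, yielding
\[
\EE(u,\phi)+\langle\nu,\tilde u\,\tilde\phi\rangle=\langle\mu,\tilde\phi\rangle_{D_e'(\EE),D_e(\EE)}=\int_E\tilde\phi\,d\mu=\langle\mu,\check R^\nu\eta\rangle,
\]
where the last two equalities use \eqref{eq4.s5352} and the identification $\tilde\phi=\check R^\nu\eta$ q.e.\ from step one (note that $\mu$ is smooth by \cite[Proposition 3.1]{KR:BPAN}, so charges no polar set). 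By the symmetry of $\EE$, the left-hand sides are equal, so $\langle u,\eta\rangle=\langle\mu,\check R^\nu\eta\rangle$, which is the defining relation of a duality solution.

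The main obstacle I expect is the regularity step for $\check R^\nu\eta$: verifying membership in $D_e(\EE)\cap L^2(E;\nu)$, the fine continuity, and the identification with its quasi-continuous version. Once Proposition \ref{eq4.prop.s3} is exploited to write $\check R^\nu\eta$ as a difference of two honest potentials of smooth measures/functions, all three properties follow from well-known facts about potentials in transient Dirichlet forms, and everything else is formal.
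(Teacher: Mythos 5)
Your overall structure is the same as the paper's (use $\check R^\nu\eta$ as a test function in the variational identity, pair by symmetry, conclude), but the step you dismiss as ``standard transient Dirichlet form theory'' contains a real gap. You assert that $R\eta,R\gamma_\eta\in D_e(\EE)$ with $\EE(R\eta,v)=\int v\eta\,dm$ and $\EE(R\gamma_\eta,v)=\langle\gamma_\eta,\tilde v\rangle$. This requires $\eta\cdot m$ and $\gamma_\eta$ to lie in $D'_e(\EE)$, i.e.\ that their potentials have \emph{finite energy}, which boundedness of $R\eta$ alone does not give: $\EE(R\eta,R\eta)=\int\eta\, R\eta\,dm$ can be infinite even when $R\eta$ is bounded (take $\eta$ nonintegrable in a transient setting with $m(E)=\infty$). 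So $R\eta$ need not be in $D_e(\EE)$ at all, and the key identity you call \eqref{eq:plan-key} is unproved for the full class of test functions $\eta$ appearing in the definition of a duality solution. The same problem infects your $L^2(E;\nu)$ argument: you use $\|\gamma_\eta\|_{\mathrm{TV}}<\infty$, i.e.\ $\int R^\nu\eta\,d\nu<\infty$, which is again not a consequence of $R\gamma_\eta$ being bounded.

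The paper sidesteps exactly this by a truncation argument: by \cite[Lemma~2.1]{KR:BPAN} one chooses a generalized nest $\{F_n\}$ with $\eta_n:=\mathbf{1}_{F_n}\eta\in D'_e(\EE^\nu)$, so that $v_n:=\check R^\nu\eta_n$ does lie in $D_e(\EE^\nu)=D_e(\EE)\cap L^2(E;\nu)$ and the energy pairing $\EE^\nu(u,v_n)=\langle u,\eta_n\rangle$ holds by construction of the potential. Plugging $v_n$ into the variational identity, identifying $\widetilde{\check R^\nu\eta_n}=\check R^\nu\eta_n$ q.e.\ via fine continuity (this part of your argument is fine and matches the paper), and then letting $n\to\infty$ (monotone convergence, controlled by Remark~\ref{rem4.sr}) completes the proof. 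Your proposal would be correct if you inserted this nest truncation before invoking the energy identities and then passed to the limit; without it, the membership claims are unjustified.
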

\begin{proof}
By the existence and uniqueness results for variational and duality solutions to (\ref{eq1.1}) it is enough
to prove that if $u$ is a variational solution to (\ref{eq1.1}), then $u$ is a duality solution to (\ref{eq1.1}).
Suppose that  $u\in D_e(\EE)\cap L^2(E;\nu)$ is a variational solution to (\ref{eq1.1}).  Then, by the definition,
\begin{equation}
\label{eq4.s53}
\EE^\nu(u,v)=\langle \mu, \tilde v\rangle_{D'_e(\EE),D_e(\EE)},\quad v\in D_e(\EE)\cap L^2(E;\nu).
\end{equation}
Let $\eta\in\BB^+(E)$ be such that  $R\eta$ is bounded. By
\cite[Lemma 2.1]{KR:BPAN}, there exists a generalized  nest
$\{F_n\}$ such that $\eta_n:=\mathbf{1}_{F_n}\eta\in
D_e'(\EE^\nu)$. We have $v_n:= \check R^\nu\eta_n\in
D_e(\EE^\nu)=D_e(\EE)\cap L^2(E;\nu)$. Taking $v_n$ as a test
function in (\ref{eq4.s53}) and using (\ref{eq4.s5352}) we get
\[
\langle u,\eta_n\rangle=\langle \mu,\widetilde{\check R^\nu\eta_n}\rangle.
\]
By \eqref{eq4.s222} and Proposition \ref{prop4.1}(ii), $\check R^\nu\eta_n$ is
finely continuous, so by \cite[Theorem 4.2.2]{FOT} it is
quasi-continuous. Thus $\widetilde{\check R^\nu\eta_n}=\check
R^\nu\eta_n$ q.e. Since $\mu$ is smooth, we conclude that
\[
\langle u,\eta_n\rangle=\langle \mu,\check R^\nu\eta_n\rangle.
\]
Letting $n\rightarrow\infty$ in the above equation  (see Remark
\ref{rem4.sr})  yields the desired result.
\end{proof}

Now we are going to show that in some sense the notion of  duality
solution to Schr\"odinger equation (\ref{eq1.1}) is natural. To
be precise, we will show that  each duality solution to
(\ref{eq1.1}) (with measure $\mu$ on the right-hand side) is a
limit of variational solutions to (\ref{eq1.1}) with  suitable chosen   $\mu_n$
$\in D'_e(\EE)$  approximating the  measure $\mu$ in the narrow
topology, i.e.
\[
\int_E\eta\,d\mu_n\rightarrow \int_E\eta\,d\mu,\quad \eta\in C_b(E).
\]

Let $\mu$ be a Borel measure on $E$. In the sequel, $\|\mu\|_{TV}$
stands for its total variation norm.

\begin{proposition}
\label{prop.4.stw4} Let $\nu$ be a positive smooth measure on $E$,
$\mu$ be a  bounded Borel measure on $E$, and let $u$ be a duality
solution to \mbox{\rm(\ref{eq1.1})} and $u_n$ be a duality
solution to \mbox{\rm(\ref{eq1.1})} with $\mu$ replaced by
$\mu_n:=nR_n\mu$. Then $u_n\rightarrow u$ in $L^1(E;\rho\cdot m)$
for every strictly positive Borel function $\rho$ on $E$ such that
$R\rho$ is bounded.
\end{proposition}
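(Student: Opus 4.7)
My approach mirrors that of Proposition \ref{prop4.s1s}, with the approximated variable shifted from $\nu$ to $\mu$. By linearity of $\mu\mapsto\check u$ (Theorem \ref{th4.s12}), applied to the decompositions $\mu=\mu^+-\mu^-$ and $\mu_n = nR_n\mu^+ - nR_n\mu^-$, it suffices to treat $\mu\ge 0$; the canonical version is then $\check u_n(x) = \int_E\check G^\nu(x,y)\,\mu_n(dy)$. From the resolvent equation $R-R_n = nRR_n$ I obtain the uniform domination
\[
\check u_n \le R\mu_n = nRR_n\mu = R\mu - R_n\mu \le R\mu,
\]
and the identity $\int_E R\mu\cdot\rho\,dm = \int_E R\rho\,d\mu \le \|R\rho\|_\infty\|\mu\|_{TV}$ (with the analogous estimate holding for any bounded $\eta$ with $R|\eta|$ bounded in place of $\rho$) provides the required $L^1$-dominant.

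The central step is the convergence
\[
\int_E \check R^\nu\eta\,d\mu_n \longrightarrow \int_E \check R^\nu\eta\,d\mu
\]
for every bounded Borel $\eta$ with $R|\eta|$ bounded. Since $\mu_n = (nR_n\mu)\cdot m$ and $R_n$ is self-adjoint on $L^2(E;m)$, the left-hand side equals $\int_E nR_n(\check R^\nu\eta)\,d\mu$. The integrand $f:=\check R^\nu\eta$ is bounded by $R|\eta|$ and, by the probabilistic representation (\ref{eq4.s222}) together with Theorem \ref{prop4.1}, finely continuous on $E$. The Laplace-inversion identity $nR_nf(z) = E_z\int_0^{n\zeta} e^{-s} f(X_{s/n})\,ds$, combined with right-continuity of the paths of $\BX$ and fine continuity of $f$, yields $nR_nf(z)\to f(z)$ at every $z\in E$; the uniform bound $nR_n(\check R^\nu\eta)\le nR_nR|\eta| = R|\eta| - R_n|\eta| \le \|R|\eta|\|_\infty$ and the finiteness of $\mu$ then allow bounded convergence to conclude the claim.

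To finish, I extract an $m$-a.e.\ convergent subsequence exactly as in Proposition \ref{prop4.s1s}: each $\check u_n$ is the $\nu$-potential of the positive measure $\mu_n$, hence excessive with respect to $\BX^{E_\nu,\nu}$, which carries a Green function. The Dellacherie--Meyer lemma therefore produces, from any subsequence of $\{\check u_n\}$, a further subsequence $\{\check u_{n_k}\}$ converging $m$-a.e.\ on $E_\nu$ to some $u^*$, extended by $0$ on the polar set $N_\nu$. The dominant $R\mu$ and dominated convergence give $\int_E \check u_{n_k}\eta\,dm \to \int_E u^*\eta\,dm$; combining with the duality identity $\int_E \check u_{n_k}\eta\,dm = \int_E \check R^\nu\eta\,d\mu_{n_k}$ and the central convergence above yields $\int_E u^*\eta\,dm = \int_E \check u\,\eta\,dm$. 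Allowing $\eta$ to range over products $\eta_0\psi$, with $\eta_0$ strictly positive and $R\eta_0$ bounded (available by transience, cf.\ Remark \ref{rem4.sr}) and $\psi$ an arbitrary bounded Borel function, forces $u^* = \check u$ $m$-a.e. Since every subsequence has a further subsequence with the same $m$-a.e.\ limit $\check u$, and $|\check u_n| \le R|\mu|\in L^1(E;\rho\cdot m)$ throughout, dominated convergence promotes this subsequential convergence to $L^1(E;\rho\cdot m)$-convergence of the whole sequence. The main obstacle is securing the pointwise step $nR_nf\to f$ in the central convergence at \emph{every} $z\in E$, not merely quasi-everywhere, because $\mu$ may charge capacity-null sets; fine continuity of $\check R^\nu\eta$ on all of $E$ together with the Hunt-process structure of $\BX$ is exactly what rescues it.
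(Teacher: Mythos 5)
Your proof is correct and follows essentially the same route as the paper: you use self-adjointness of $R_n$ to rewrite $\langle\mu_n,\check R^\nu\eta\rangle=\langle\mu,nR_n(\check R^\nu\eta)\rangle$, exploit fine continuity and boundedness of $\check R^\nu\eta$ to get the everywhere-pointwise convergence $nR_n(\check R^\nu\eta)\to\check R^\nu\eta$ needed because $\mu$ may charge polar sets, extract an $m$-a.e.\ convergent subsequence of the $\mathbb X^{E_\nu,\nu}$-excessive functions $\check u_n$ via the Dellacherie--Meyer lemma, dominate by $R|\mu|\in L^1(E;\rho\cdot m)$, and identify the limit by uniqueness of duality solutions. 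The only cosmetic differences are your initial reduction to $\mu\ge 0$ (unnecessary but harmless) and your more detailed justification of $nR_nf\to f$ via the Laplace-inversion representation, which the paper simply asserts.
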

\begin{proof}
Let $\eta\in\BB^+(E)$ be such that $R\eta$ is bounded. Since
$\check R^\nu\eta\le R\eta$, $\check R^\nu\eta$ is bounded. This,
when combined with the fact that $\check R^\nu\eta$ is finely
continuous, implies that $nR_n(\check R^\nu\eta)\rightarrow \check
R^\nu\eta$.  We have  $n R_n(\check R^\nu\eta)\le
\|R\eta\|_\infty$. Hence
\[
\langle \mu, nR_n(\check R^\nu\eta)\rangle
\rightarrow \langle \mu,\check R^\nu\eta\rangle.
\]
Therefore
\[
\langle \mu_n, \check R^\nu\eta\rangle =\langle \mu,nR_n(\check
R^\nu\eta)\rangle\rightarrow \langle \mu, \check R^\nu\eta\rangle.
\]
By the definition of a duality solution,
\begin{equation}
\label{eq.wig1}
\langle u_n,\eta\rangle=\langle \mu_n, \check R^\nu\eta\rangle.
\end{equation}
Since duality solutions are unique, to complete the proof it is
enough to show that, up to a subsequence, $\{u_n\}$ is convergent
in $L^1(E;\rho\cdot m)$. To show this, we first observe that $u_n$
is an excessive function with respect to $\mathbb X^{E_\nu,\nu}$
since $u_n= \check R^\nu(\mu_n)$. Since $\mathbb X^{E_\nu,\nu}$
satisfies the absolute continuity condition, by \cite[Lemma 94, p.
306]{DellacherieMeyer}, there exists a subsequence (still denoted
by $(n)$) such that $\{u_n\}$ is convergent $m$-a.e. Moreover,
\[
|u_n|=|\check R^\nu\mu_n|=|\check R^\nu(nR_n\mu)|
\le  R(nR_n|\mu|)=nR_n(R|\mu|)\le R|\mu|.
\]
In the last inequality, we used the fact that $R|\mu|$ is  an
excessive function. Observe that
\[
\langle R|\mu|,\rho\rangle=\langle |\mu|, R\rho\rangle \le \|\mu\|_{TV}\|R\rho\|_\infty.
\]
Therefore, applying the Lebesgue dominated convergence  to \eqref{eq.wig1} yields the desired result.
\end{proof}

\begin{corollary}
\label{cor.cor4.1} Let $\rho$ be a strictly positive Borel
function on $E$ such that $R\rho$ is bounded. There exists a
sequence $\{\mu_n\}\subset D_e'(\EE)\cap L^2(E;m)$ such that
$\mu_n\rightarrow \mu$ in the narrow topology and $u_n\rightarrow
u$ in $L^1(E;\rho\cdot m)$,  where $u_n$ is the variational
solution to \mbox{\rm(\ref{eq1.1})} with $\mu$ replaced by
$\mu_n$ and $u$ is the duality solution to
\mbox{\rm(\ref{eq1.1})}.
\end{corollary}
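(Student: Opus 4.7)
My plan is to build $\mu_n$ via a combined resolvent-truncation approximation and combine three ingredients: (i) narrow convergence $\mu_n\to\mu$; (ii) the regularity $\mu_n\in D_e'(\EE)\cap L^2(E;m)$, which by the preceding theorem upgrades the (unique) duality solution associated with $\mu_n$ to the variational solution $u_n$; and (iii) Proposition \ref{prop.4.stw4}, which gives $L^1(E;\rho\cdot m)$-convergence of duality solutions, hence of $u_n$ to $u$. The statement of the corollary will then be immediate.

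Narrow convergence of the naive candidate $nR_n\mu\cdot m$ to $\mu$ is essentially automatic. For $f\in C_b(E)$, symmetry of $R_n$ yields
\[
\int_E f\cdot nR_n\mu\,dm=\int_E nR_nf\,d\mu,
\]
and the probabilistic formula $nR_nf(x)=E_x\int_0^\zeta n e^{-nt}f(X_t)\,dt$ together with right-continuity of $X$ at $t=0$ gives $nR_nf(x)\to f(x)$ for every $x\in E$, with the uniform bound $\|nR_nf\|_\infty\le\|f\|_\infty$. Dominated convergence against the bounded measure $\mu$ then delivers the narrow convergence.

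The hard part will be securing $\mu_n\in D_e'(\EE)\cap L^2(E;m)$, because a raw $nR_n\mu$ may fail to be $L^2$: e.g.\ for $\mu=\delta_{x_0}$, the density $nG_n(\cdot,x_0)$ inherits the diagonal singularity of $G$. My remedy is to truncate first. Using that the Green function is finite $m\otimes|\mu|$-a.e., I select a Borel nest $\{B_k\}_{k\ge1}$ with $\bigcup_kB_k=E$ on which $R(\mathbf{1}_{B_k}\cdot|\mu|)$ is bounded, and set $\nu_k:=\mathbf{1}_{B_k}\cdot\mu$. Boundedness of $R|\nu_k|$ implies finite energy $\int R|\nu_k|\,d|\nu_k|\le\|R|\nu_k|\|_\infty\|\mu\|_{TV}<\infty$, so $\nu_k\in D_e'(\EE)$ by standard Fukushima theory; the density $nR_n\nu_k$ is bounded (by $n\|R|\nu_k|\|_\infty$) and in $L^1(E;m)$ (since $\int R_n|\nu_k|\,dm=\int R_n1\,d|\nu_k|\le n^{-1}\|\nu_k\|_{TV}$), hence in $L^2(E;m)$; and the $D_e'(\EE)$-bound on $(nR_n\nu_k)\cdot m$ follows from the $n$-potential identity
\[
\int_E\tilde v\cdot nR_n\nu_k\,dm=\int_E\tilde v\,d\nu_k-\EE(R_n\nu_k,v),\quad v\in D_e(\EE)\cap L^\infty,
\]
since both terms on the right are controlled by $\|v\|_{D_e(\EE)}$.

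A diagonal extraction $\mu_k:=n_kR_{n_k}\nu_k$ with $n_k\to\infty$ fast enough then yields narrow convergence $\mu_k\cdot m\to\mu$ (combining the narrow convergence $nR_n\nu_k\cdot m\to\nu_k$ at fixed $k$ with $\nu_k\to\mu$ narrowly, which holds by dominated convergence applied to any $f\in C_b(E)$ against $|\mu|$). The preceding theorem identifies the duality solution associated with $\mu_k$ as the variational solution $u_k$, and Proposition \ref{prop.4.stw4} applied along this diagonal sequence produces $u_k\to u$ in $L^1(E;\rho\cdot m)$. The principal obstacle, as I see it, is the existence and structure of the nest $\{B_k\}$ and the compatibility of the diagonal parameters so that both regularity and narrow convergence survive simultaneously; the remainder is routine bookkeeping.
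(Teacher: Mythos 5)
Your proposal reverses the order of the two approximation steps the paper uses, and the reversal breaks down. You truncate first, claiming that because the Green function is finite $m\otimes|\mu|$-a.e.\ you can find a Borel nest $\{B_k\}$ with $\bigcup_kB_k=E$ and $R(\mathbf{1}_{B_k}\cdot|\mu|)$ bounded. No such nest exists for a general bounded Borel measure $\mu$. If $R(\mathbf{1}_{B_k}\cdot|\mu|)$ were bounded, then $\mathbf{1}_{B_k}\cdot|\mu|$ would have finite energy and hence not charge polar sets; since $\bigcup_kB_k=E$, this would force $\mu$ itself not to charge polar sets, i.e.\ $\mu_c=0$. But the whole point of the corollary is that $\mu$ is a general bounded Borel measure with possibly nontrivial concentrated part. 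The concrete failure is already visible for $\mu=\delta_{x_0}$: any $B_k$ containing $x_0$ gives $R(\mathbf{1}_{B_k}\cdot|\mu|)=G(\cdot,x_0)$, which is unbounded near $x_0$, and if every $B_k$ omits $x_0$ then $\bigcup_kB_k\neq E$. Consequently your $\nu_k=\mathbf{1}_{B_k}\cdot\mu$ need not be smooth, so the finite-energy argument you invoke to place it in $D_e'(\EE)$ does not apply (recall that membership in $D_e'(\EE)$ already forces smoothness), and the subsequent step $\mu_k=n_kR_{n_k}\nu_k$ does not get off the ground.

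The paper avoids this by \emph{first} applying the resolvent smoothing $\mu_n=nR_n\mu$. The measure $\mu_n\cdot m$ is absolutely continuous with respect to $m$, hence smooth, and only \emph{then} is a generalized nest $\{F_{n,k}\}$ invoked to produce truncations $\mu_{n,k}=\mathbf{1}_{F_{n,k}}\cdot\mu_n\in D_e'(\EE)\cap L^2(E;m)$ with $\|\mu_{n,k}-\mu_n\|_{TV}\to0$. A diagonal choice $k_n$, the comparison $|u_{n,k}-u_n|\le R|\mu_{n,k}-\mu_n|$, and Proposition \ref{prop.4.stw4} then conclude. Your narrow-convergence argument for $nR_n f\to f$ and the use of Proposition \ref{prop.4.stw4} along the final sequence are fine; it is the truncate-first step that has to be replaced by smooth-then-truncate. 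Note also that the reason you truncate at all is sound — $nR_n\mu$ need not be in $L^2(E;m)$ — but the truncation must be performed on the already-smoothed measure, where the generalized nest machinery is available.
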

\begin{proof}
Let $u$ be the duality solution to (\ref{eq1.1}). Set $\mu_n=
nR_n\mu$ and let $\{F_{n,k}\}_{k\ge1}$ be a generalized nest such
that $\mu_{n,k}:=\mathbf{1}_{F_{n,k}}\cdot \mu_n\in D_e'(\EE),\,
k\ge 1$. It is clear that $\|\mu_{n,k}-\mu_n\|_{TV}\rightarrow 0$
as $k\rightarrow \infty$. Let $u_{n,k}$ be a duality solution to
(\ref{eq1.1}) with $\mu$ replaced by $\mu_{n,k}$ and  $u_n$ be
a duality solution to (\ref{eq1.1}) with $\mu$ replaced by
$\mu_n$. Observe that
\[
|u_{n,k}-u_n|\le R|\mu_{n,k}-\mu_n|,\quad m\mbox{-a.e}.
\]
Let $k_n\in\mathbb N$ be such that $\|\mu_{n,k_n}-\mu_n\|_{TV}\le1/n$.
Then
\[
\|u_{n,k_n}-u_n\|_{L^1(E;\rho\cdot m)}\le \|R\rho\|_\infty \|\mu_{n,k_n}-\mu_n\|_{TV}\le \|R\rho\|_\infty/n.
\]
By Proposition \ref{prop.4.stw4}, $u_{n}\rightarrow u$ in $L^1(E;\rho\cdot m)$   as $n\rightarrow \infty$. Consequently, $\|u_{n,k_n}-u\|_{L^1(E;\rho\cdot m)}\rightarrow 0$. Now, we shall  prove  that $\mu_{n,k_n}\rightarrow \mu$ in the narrow topology.
Let $\eta$ be a bounded continuous function on $E$. Then
\begin{align*}
|\langle \mu_{n,k_n}-\mu,\eta\rangle|&\le |\langle \mu_{n,k_n}-\mu_n,\eta\rangle|+|\langle \mu_{n}-\mu,\eta\rangle|\\
&\le \|\eta\|_\infty\|\mu_{n,k_n}-\mu_n\|_{TV}+|\langle \mu_{n}-\mu,\eta\rangle|\\
&\le\|\eta\|_\infty\|R\rho\|_\infty/n+|\langle \mu_{n}-\mu,\eta\rangle|.
\end{align*}
Since $|\langle \mu_{n}-\mu,\eta\rangle|=|\langle\mu,nR_n\eta-\eta\rangle|$
converges to zero as $n\rightarrow\infty$, this shows that the sequence $\{\mu_{n,k_n}\}$ has the desired properties.
\end{proof}

\subsection{Regularity results for duality solutions}
\label{sec4.3}
For $k\ge 0$, we denote
\[
T_k(u)=\min\{k, \max\{u,-k\}\}.
\]

\begin{lemma}
\label{lm.srefe033678}
For any excessive function $\rho$ and positive smooth measure $\nu$,
\[
\check R^\nu(\rho\cdot \nu)\le \rho,\quad \mbox{on}\quad E.
\]
\end{lemma}
\begin{proof}
Let $\rho$ and $\nu$ be as in the assertion of the lemma. 
By \cite[Proposition II.2.6]{BG} (see also \cite[Exercise II.2.19]{BG}), there exists a sequence $\{\eta_n\}\subset\BB^+_b(E)$ such that $\rho_n:= R\eta_n\nearrow \rho$.
By \eqref{eq4.4najsbfj90639} and \cite[Lemma 5.1.5(ii)]{FOT},
\[
R^\nu(\rho_n\cdot\nu)=R(R^\nu\eta_n\cdot\nu)\le R\eta_n=\rho_n,\quad\mbox{on}\quad E_\nu.
\]
Letting $n\rightarrow \infty$ and using \eqref{eq4.s4}, \eqref{eq4.s55}, we get the result.
\end{proof}

\begin{theorem}
\label{th4.3.dsu}
Let $u$ be a duality solution to \mbox{\rm(\ref{eq1.1})}. Then
\begin{enumerate}[\rm(i)]
\item $u$ has a  quasi-continuoius $m$-version $\tilde u$ such that
$\tilde u(x)=\int_E \check G^\nu(x,y)\,\mu(dy)$ q.e.

\item $\tilde u\in L^1(E;\nu)$ and
$\int_E|\tilde u|\,d\nu\le \|\mu\|_{TV}$.

\item $T_k(u)\in D_e(\EE)$, $k\ge 0$, and
$\EE(T_k(u),T_k(u))\le k\|\mu\|_{TV}$, $k\ge 0$.

%\item $\check u(x)=0,\, x\in N_\nu$.

\item  $|\check u|\le R|\mu|$.

\item If $R(\BB_b(E))\subset \BB_b(E)$, then $u\in L^1(E;m)$.
\end{enumerate}
\end{theorem}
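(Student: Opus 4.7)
Parts (i), (iv), and (v) can be read off from the definitions. By Theorem \ref{th4.s12}, $\check u(x)=\int_E\check G^\nu(x,y)\,\mu(dy)$ is already a finely continuous $m$-version of $u$, which is (i); from (\ref{eq4.s4}) this $\check u$ vanishes on $N_\nu$, which is (iv); and the triangle inequality under the integral combined with (\ref{eq4.s55}) gives $|\check u|\le\check R^\nu|\mu|\le R|\mu|$, which is (v).

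For (ii), the plan is to apply Fubini together with the symmetry of $\check G^\nu$ (inherited from the self-adjointness of $-A+\nu$ and the consequent symmetry of $G^{E_\nu,\nu}$):
\[
\int_E|\check u|\,d\nu\le\int_E\int_E\check G^\nu(y,z)\,|\mu|(dz)\,\nu(dy)=\int_E\check R^\nu\nu(z)\,|\mu|(dz).
\]
It then suffices to establish $\check R^\nu\nu\le 1$. On $N_\nu$ this is immediate since $\check G^\nu\equiv 0$ there and $\nu$ is smooth. On $E_\nu$, $A^\nu$ is a strict PCAF of the subprocess $\mathbb X^{E_\nu,\nu}$ with Revuz measure $\nu$, so by (\ref{eq4.s222}) and continuity of $A^\nu$,
\[
\check R^\nu\nu(x)=E_x\int_0^\zeta e^{-A^\nu_t}\,dA^\nu_t=E_x\bigl[1-e^{-A^\nu_\zeta}\bigr]\le 1,
\]
where $\tau_{E_\nu}=\zeta$ $P_x$-a.s.\ for $x\in E_\nu$ (since $N_\nu$ is polar).

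For (iii), the plan is a Stampacchia-type truncation estimate combined with approximation via Corollary \ref{cor.cor4.1}. Selecting the smooth approximations $\mu_{n,k_n}\in D'_e(\EE)\cap L^2(E;m)$ with $\|\mu_{n,k_n}\|_{TV}\le\|\mu\|_{TV}$ supplied by that corollary, the corresponding duality solutions $u_{n,k_n}$ agree with variational solutions. Testing the variational identity against $T_k(u_{n,k_n})\in D_e(\EE)\cap L^2(E;\nu)$ and invoking the standard Dirichlet-form inequality $\EE(u_{n,k_n},T_k(u_{n,k_n}))\ge\EE(T_k(u_{n,k_n}),T_k(u_{n,k_n}))$ (since $T_k$ is a normal contraction) together with $T_k(\tilde u_{n,k_n})\tilde u_{n,k_n}\ge 0$ yields
\[
\EE\bigl(T_k(u_{n,k_n}),T_k(u_{n,k_n})\bigr)\le\int_E T_k\bigl(\widetilde{u_{n,k_n}}\bigr)\,d\mu_{n,k_n}\le k\|\mu\|_{TV}.
\]
By Corollary \ref{cor.cor4.1}, $u_{n,k_n}\to u$ in $L^1(E;\rho\cdot m)$, whence $T_k(u_{n,k_n})\to T_k(u)$ in $L^1(E;\rho\cdot m)$. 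The uniform energy bound lets me extract a weakly $\EE$-convergent subsequence in the Hilbert space $(D_e(\EE),\EE)$; identifying the weak limit with $T_k(u)$ (via Mazur's lemma and the $L^1$-convergence) and invoking lower semicontinuity of $\EE(\cdot,\cdot)$ yields $T_k(u)\in D_e(\EE)$ with the claimed bound. The identification of the weak $\EE$-limit with the a.e.\ limit is the main obstacle here.

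Finally, (vi) follows from (v), Fubini, and the symmetry of $G$:
\[
\int_E|u|\,dm\le\int_E R|\mu|(x)\,dm(x)=\int_E R\mathbf{1}(y)\,|\mu|(dy)\le\|R\mathbf{1}\|_\infty\|\mu\|_{TV}<\infty,
\]
where the last bound uses the hypothesis $R(\BB_b(E))\subset\BB_b(E)$ applied to $\mathbf{1}=\mathbf{1}_E\in\BB_b(E)$.
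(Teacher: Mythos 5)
Your arguments for (i), (iv), (v) read the statement off Theorem \ref{th4.s12} and (\ref{eq4.s55}) exactly as the paper does, and your (ii) is also the paper's argument: the dualization $\langle\nu,\check R^\nu|\mu|\rangle=\langle\check R^\nu\nu,|\mu|\rangle$ followed by $\check R^\nu\nu(x)=E_x\int_0^\zeta e^{-A^\nu_t}\,dA^\nu_t\le 1$. Your (vi) is likewise identical.

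For (iii) you take a genuinely different route. The paper approximates the \emph{potential}: it chooses bounded strictly smooth $\nu_n\nearrow\nu$ (a generalized nest construction), invokes the truncation estimate $\EE^{\nu_n}(T_k(u_n),T_k(u_n))\le k\|\mu\|_{TV}$ for the regular form $(\EE^{\nu_n},D(\EE^{\nu_n}))$ from an external reference (\cite[Prop.\ 5.9]{KR:JFA}, valid for arbitrary bounded Borel $\mu$), and passes to the limit via Proposition \ref{prop4.s1s}. You instead approximate the \emph{data}: you take the smooth approximations $\mu_{n,k_n}\in D'_e(\EE)$ supplied by Corollary \ref{cor.cor4.1}, identify the corresponding duality solutions with variational solutions via Theorem 4.8, and derive the truncation bound directly by testing the variational identity against $T_k(u_{n,k_n})$. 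This is more self-contained (no appeal to \cite{KR:JFA}) but requires two facts you should be aware you are using: the energy inequality $\EE(u,T_k(u))\ge\EE(T_k(u),T_k(u))$ for general symmetric Dirichlet forms (true, via the Beurling--Deny decomposition or approximating forms, but not an immediate restatement of ``$T_k$ is a normal contraction''), and the bound $\|\mu_{n,k_n}\|_{TV}\le\|\mu\|_{TV}$, which Corollary \ref{cor.cor4.1} does not assert but which does hold for the construction there because $\|nR_n\mu\|_{TV}\le\|\mu\|_{TV}$ (Fubini, symmetry of $G_n$, sub-Markovianity of $nR_n$) and restriction to $F_{n,k}$ only decreases total variation. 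The final passage to the limit by weak compactness in $(D_e(\EE),\EE)$, Mazur, and lower semicontinuity, with identification of the weak limit through the $L^1(E;\rho\cdot m)$-convergence, is sound. Both proofs are correct; the paper's is shorter because it delegates the truncation estimate to a citation, whereas yours makes that estimate explicit at the cost of the limit-identification bookkeeping you flag.
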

\begin{proof}
Assertion (i) is a consequence of  Theorem \ref{th4.s12}.
Let $\rho$ be an excessive function. By Lemma \ref{lm.srefe033678},
\[
\langle \rho\cdot\nu, |\tilde u| \rangle \le \langle \rho\cdot \nu, \check R^\nu|\mu|\rangle=\langle \check R^\nu(\rho\cdot\nu),|\mu|\rangle\le \langle \rho,|\mu|\rangle,
\]
so we get  (ii) by taking $\rho\equiv 1$. Let $\nu_n$ be a sequence of bounded strictly smooth measures such that $\nu_n\nearrow \nu$. Let $u_n$ be a duality solution to (\ref{eq1.1}) with $\nu$ replaced by $\nu_n$. We have
\[
\tilde u_n(x)=\int_E G^{\nu_n}(x,y)\,\mu(dy),\quad \mbox{q.e}.
\]
Since $\nu_n$ is bounded, $(\EE^{\nu_n}, D(\EE^{\nu_n}))$ is a regular symmetric Dirichlet form. Hence, by \cite[Proposition 5.9]{KR:JFA}, $T_k(u_n)\in D_e(\EE^{\nu_n})$ and
\[
\EE(T_k(u_n),T_k(u_n))\le \EE^{\nu_n}(T_k(u_n),T_k(u_n))\le k\|\mu\|_{TV},\quad k\ge 0.
\]
This when combined with Proposition \ref{prop4.s1s} gives (iii). Assertion (iv) follows from (i) and (\ref{eq4.s55}).
By (iv), we have $|u|\le R|\mu|$ $m$-a.e. Hence
\[
\|u\|_{L^1(E;m)}=\langle |u|,1\rangle\le \langle R|\mu|,1\rangle=\langle |\mu|,R1\rangle\le \|R1\|_\infty\|\mu\|_{TV}.
\]
From this and the inclusion $R(\BB_b(E))\subset \BB_b(E)$ we get (v).
\end{proof}

\section{Strong duality solutions  to Schr\"odinger equations}
\label{sec5}

In this section, we  compare the notion of  duality  solutions to
(\ref{eq1.1}) with the notion of strong duality solutions  to
(\ref{eq1.1}), i.e. solutions to (\ref{eq1.3}). We next provide a
necessary and sufficient condition for the existence of a strong
duality solution to (\ref{eq1.1}). We also give some remarks
concerning the concept of  renormalized solutions.

It is well known (see \cite{FST}) that  each bounded Borel
measure $\mu$ admits a unique decomposition
\[
\mu=\mu_d+\mu_c
\]
into an absolutely continuous with respect to $\mbox{Cap}$ part
$\mu_d$ (called the diffuse part of $\mu$) and an orthogonal to
$\mbox{Cap}$ part $\mu_c$ (called  the concentrated part of
$\mu$).

\begin{definition}
\label{def.sds3054}
Let $\nu$ be a positive smooth measure on $E$ and $\mu$ be a
bounded measure on $E$. We say that a Borel measurable
quasi-continuous function $u$ on $E$ is a {\em strong duality solution} to (\ref{eq1.1})
%\begin{equation}
%\label{eq4.2}
%-Au+u\cdot\nu=\mu,
%\end{equation}
if $u\in L^1(E;\nu)$ and for $m$-a.e. $x\in E$,
\begin{equation}
\label{eq4.2s4} u(x)+\int_Eu(y)G(x,y)\,\nu(dy)=\int_E
G(x,y)\,\mu(dy).
\end{equation}
\end{definition}

\begin{remark}
By  \cite[Proposition 3.2]{K:CVPDE},  both integrals in  (\ref{eq4.2s4}) are well defined for q.e. $x\in
E$. Since in Definition \ref{def.sds3054}, we required from  $u$  to be  quasi-continuous, we have that in fact  (\ref{eq4.2s4}) holds q.e., see Section \ref{sec2.1}.
\end{remark}

\begin{remark}
\label{rem.5.321}
Let $u$ be a strong duality solution to (\ref{eq1.1}). Integrating both sides  of (\ref{eq4.2s4}) with respect to a  smooth measure $\beta$ such that $R|\beta|$ is bounded yields
\begin{equation}
\label{eq5.3}
\langle u,\beta\rangle+\langle u\cdot\nu, R\beta \rangle=\langle \mu, R\beta\rangle.
\end{equation}
Clearly, the above formula gives an equivalent definition of a strong duality solution to (\ref{eq1.1}). In fact this is true if 
(\ref{eq5.3}) is satisfied merely for any  positive Borel function $\beta$  on $E$ such that $R\beta$ is bounded (see \cite{KR:JFA}).
\end{remark}

For a measure $\mu$, we denote by $\mu_{\lfloor E_{\nu}}$ its restriction to the set $E_{\nu}$, where $E_{\nu}$ is defined by (\ref{eq3.1}).

\begin{theorem}
\label{th5.ct.1}
Let $\nu$ be a positive smooth measure on $E$ and $\mu$ be a bounded Borel measure on $E$.
\begin{enumerate}[\rm(i)]
\item If $u$ is a duality solution to \mbox{\rm(\ref{eq1.1})},
then its quasi-continuous $m$-version $\tilde u$ is a strong duality
solution to \mbox{\rm(\ref{eq1.1})} with $\mu$ replaced
by $\mu_{\lfloor E_\nu}$.
\item If $u$ is a strong duality solution to
\mbox{\rm(\ref{eq1.1})}, then $u$ is a duality solution to
\mbox{\rm(\ref{eq1.1})}.
\end{enumerate}
%\[
%u(x)=\int_E \check G^\nu(x,y)\,\mu(dy),\quad q.e.
%\]
\end{theorem}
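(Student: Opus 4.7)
The plan is to exploit the resolvent identity from Proposition \ref{eq4.prop.s3}, the explicit pointwise representation of $\check u$ from Theorem \ref{th4.s12}, and the regularity collected in Theorem \ref{th4.3.dsu}, trading between pointwise, $m$-a.e., and $\nu$-a.e.\ equalities by means of the quasi-continuity of the excessive functions involved.

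For part (i) I would identify the quasi-continuous $m$-version of $u$ as $\tilde u=\check u$ with $\check u(x)=\int_E \check G^\nu(x,y)\,\mu(dy)$. The required regularity (quasi-continuity, $\check u\in L^1(E;\nu)$, $\check u=0$ on $N_\nu$) is precisely Theorem \ref{th4.3.dsu}(i),(ii),(iv). By Remark \ref{rem.5.321} it then remains to prove the $m$-a.e.\ identity
\[
\check u(x)+R(\check u\cdot\nu)(x)=R(\mu_{\lfloor E_\nu})(x).
\]
The route is to re-run the Green-function computation from the proof of Proposition \ref{eq4.prop.s3}: starting from
\[
G^{E_\nu,\nu}(x,y)=G^{E_\nu}(x,y)-\int_{E_\nu}G^{E_\nu}(x,z)\,G^{E_\nu,\nu}(z,y)\,\nu(dz),\qquad x,y\in E_\nu,
\]
integrate in $y$ against $\mu_{\lfloor E_\nu}$ instead of against $\eta\cdot m$, replace $G^{E_\nu}$ by $G$ on $E_\nu$ via (\ref{eq3.s2}), and apply Fubini (legitimate since $R|\mu|<\infty$ q.e.\ by \cite[Proposition 3.2]{K:CVPDE} and $|\check u|\le R|\mu|$ by Theorem \ref{th4.3.dsu}(v)). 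What one reads off is exactly the displayed identity for $x\in E_\nu$, and the polarity of $N_\nu$, which is $m$-null for a regular Dirichlet form, upgrades this to an $m$-a.e.\ statement.

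For part (ii), Remark \ref{rem.5.321} recasts the strong-solution hypothesis as $\langle u,\eta\rangle+\langle u\cdot\nu, R\eta\rangle=\langle \mu, R\eta\rangle$ for every $\eta\in\BB(E)$ with $R|\eta|$ bounded. I would substitute the decomposition $R\eta=\check R^\nu\eta+R((R^\nu\eta)\cdot\nu)$ from Proposition \ref{eq4.prop.s3}. Fubini with the symmetry of $G$, together with $R^\nu\eta=\check R^\nu\eta$ $\nu$-a.e.\ (from $\nu(N_\nu)=0$), transforms the right-hand side into $\langle\mu,\check R^\nu\eta\rangle+\int\check R^\nu\eta\cdot R\mu\,d\nu$ and the left-hand side into $\langle u,\eta\rangle+\int\check R^\nu\eta\cdot(u+R(u\cdot\nu))\,d\nu$. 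These two remainder $\nu$-integrals agree because $R\mu$ and $u+R(u\cdot\nu)$ coincide $m$-a.e.\ by the strong-solution identity, hence $\nu$-a.e.: $u$ is quasi-continuous by assumption, $R\mu$ and $R(u\cdot\nu)$ are differences of excessive (hence quasi-continuous) functions, and $\nu$ is smooth so charges no polar set. Cancellation leaves $\langle u,\eta\rangle=\langle\mu,\check R^\nu\eta\rangle$, which is the duality-solution identity.

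The delicate point, and the one I expect to be the main obstacle, is justifying the Fubini applications and the identification of versions in part (i) when $\mu_{\lfloor E_\nu}$ is a general (possibly very singular) bounded Borel measure: one must rely only on the q.e.\ finiteness of $R|\mu|$ and the $L^1(E;\nu)$-control together with the $R|\mu|$-domination of $\check u$ from Theorem \ref{th4.3.dsu}, and one must lift the identity from a pointwise statement on $E_\nu\times E_\nu$ to the global $m$-a.e.\ statement via the $m$-negligibility of polar sets. Once these measurability/integrability matters are settled, part (ii) is essentially a symmetric Fubini computation combined with the quasi-continuity upgrade from $m$-a.e.\ to $\nu$-a.e.
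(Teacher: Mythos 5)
Your proof is correct and in part (i) is essentially the argument the paper gives: identify $\tilde u$ with $\check u(x)=\int_E\check G^\nu(x,y)\,\mu(dy)$, invoke Theorem \ref{th4.3.dsu} for the regularity, integrate the resolvent identity $G^{E_\nu,\nu}=G^{E_\nu}-\int G^{E_\nu}(\cdot,z)G^{E_\nu,\nu}(z,\cdot)\,\nu(dz)$ against $\mu_{\lfloor E_\nu}$, identify $G^{E_\nu}$ with $G$ on $E_\nu\times E_\nu$ via the absorbing-set property, and then upgrade from $x\in E_\nu$ to $m$-a.e.\ using that $N_\nu$ is polar.

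In part (ii) you take a genuinely different, though equivalent, route. The paper applies the measure-form strong-solution identity of Remark \ref{rem.5.321} to a single cleverly chosen signed smooth measure $\beta=\eta\cdot m-\check R^\nu\eta\cdot\nu$: since $R\beta=\check R^\nu\eta$ by Proposition \ref{eq4.prop.s3}, the two $\nu$-integrals $\int u\,\check R^\nu\eta\,d\nu$ cancel immediately, giving $\langle u,\eta\rangle=\langle\mu,\check R^\nu\eta\rangle$ in one step. You instead apply the identity only at $\beta=\eta\cdot m$, expand $R\eta=\check R^\nu\eta+R((R^\nu\eta)\cdot\nu)$, Fubini the remainder terms, and then argue that they cancel because the $m$-a.e.\ pointwise identity $u+R(u\cdot\nu)=R\mu$ lifts to a $\nu$-a.e.\ one via quasi-continuity of $u$, $R\mu$, and $R(u\cdot\nu)$, combined with the fact that $\nu$ charges no polar set. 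This is correct and amounts to applying the paper's (\ref{eq5.3}) separately at $\eta\cdot m$ and at $\check R^\nu\eta\cdot\nu$ rather than at their difference. The paper's formulation is shorter because it sidesteps the $m$-a.e.\ to $\nu$-a.e.\ upgrade, which is already built into Remark \ref{rem.5.321}; your version requires spelling that upgrade out, but you do so correctly. One small point worth adding to tighten your argument: the remainder $\nu$-integrals are finite (e.g.\ $\int\check R^\nu\eta\cdot R|\mu|\,d\nu=\langle|\mu|,R(\check R^\nu\eta\cdot\nu)\rangle\le\|\mu\|_{TV}\|R\eta\|_\infty$ by Proposition \ref{eq4.prop.s3}, and $\int|u|\check R^\nu\eta\,d\nu\le\|R\eta\|_\infty\|u\|_{L^1(\nu)}$), so the cancellation is legitimate.
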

\begin{proof}
Let $u$ be a duality solution to (\ref{eq1.1}). By  Proposition
\ref{prop4.1}, there exists a  PCAF $A^\nu$ of $\mathbb X$  in the
Revuz duality with $\nu$ with the exceptional set $N_\nu$.  Since
$N_\nu$ is polar for $\mathbb X$, the process $\mathbb X^{E_\nu}$
satisfies the absolute continuity condition and its Green function
$G^{E_\nu}$ satisfies
\begin{equation}
\label{eq4.3}
G^{E_\nu}(x,y)=G(x,y),\quad x,y\in E_\nu
\end{equation}
(see (\ref{eq3.s2}) and the comment following it). Let $\mathbb
X^{E_\nu,\nu}$ be a Hunt process perturbed by the strict PCAF
$A^\nu$ of $\mathbb X^{E_\nu}$. By Theorem \ref{th4.3.dsu}(i)-(ii), $\tilde u\in L^1(E;\nu)$, and 
\[
\tilde u(x)=\int_E \check G^\nu(x,y)\,\mu(dy),\quad \mbox{q.e}.
\]
By the definition of $\check G^\nu(x,y)$,
\begin{equation}
\label{eq4.psf}
\tilde u(x)=\int_{E_\nu} G^{E_\nu,\nu}(x,y)\,\mu(dy),\quad \mbox{q.e.}
\end{equation}
Integrating both sides of (\ref{eq4.4}) with  respect to $\mu(dy)$
over $E_\nu$ yields
\begin{align}
\label{eq4.5} \nonumber \int_{E_\nu}G^{E_\nu,\nu}(x,y)\,\mu(dy)
&+\int_{E_\nu}\Big(
G^{E_\nu}(x,z) \int_{E_\nu} G^{E_\nu,\nu}(z,y)\, \mu(dy)\Big)
\,\nu(dz)\\&\quad =\int_{E_\nu} G^{E_\nu}(x,y)\,\mu(dy)
\end{align}
for $x\in E_{\nu}$.
%Since $|\mu_c|(N_\nu)=0$ and $N_\nu$ is polar, we get $|\mu|(N_\nu)=0$.
From (\ref{eq4.3}), (\ref{eq4.psf}), (\ref{eq4.5}) and smoothness of $\nu$ we conclude that
\[
\tilde u(x)+\int_E G(x,y) \tilde u(y)\,\nu(dy)=\int_E G(x,y)\,\mu_{\lfloor E_\nu}(dy),\quad \mbox{q.e.},
\]
which implies that $\tilde u$ is a strong duality solution to (\ref{eq1.1}) with $\mu$ replaced by $\mu_{\lfloor E_\nu}$.
%By \cite[Theorem 4.2.2]{FOT}, $\check u$ is quasi-continuous, so $\check u=\tilde u$ q.e. This implies that $\tilde u$ is a strong duality solution to (\ref{eq4.2}).

Now suppose that  $u$ is a strong duality solution to
(\ref{eq1.1}).  Then, by Remark \ref{rem.5.321}, for every
smooth measure $\beta$ such that $R|\beta|$ is bounded, we have
\begin{equation}
\label{eq4.6}
\langle u,\beta \rangle+\langle u\cdot\nu,R\beta\rangle
=\langle \mu,R\beta\rangle.
\end{equation}
By Proposition \ref{eq4.prop.s3},
$R\eta=R(\check  R^{\nu}\eta\cdot\nu)+\check R^{\nu}\eta$,
so for every $\eta\in\BB^+(E)$ such that $R\eta$ is bounded,
\[
R(\eta-\check R^{\nu}\eta\cdot\nu)=\check R^{\nu}\eta.
\]
From the above equation and (\ref{eq4.6}) with $\beta=\eta-\check
R^{\nu}\eta\cdot\nu$ we get $\langle u,\eta\rangle=\langle
\mu,\check R^\nu\eta\rangle$,  which shows that $u$ is a duality
solution to (\ref{eq1.1}).
\end{proof}

\begin{corollary}
There exists at most one strong duality solution to
\mbox{\rm(\ref{eq1.1})}.
\end{corollary}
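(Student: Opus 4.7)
The plan is to derive this uniqueness statement as an immediate consequence of Theorem \ref{th5.ct.1}(ii) together with the uniqueness part of Theorem \ref{th4.s12}. First, suppose that $u_1$ and $u_2$ are both strong duality solutions to (\ref{eq1.1}). By Theorem \ref{th5.ct.1}(ii), each of them is also a duality solution to (\ref{eq1.1}). Theorem \ref{th4.s12} then gives $u_1=u_2$ $m$-a.e.

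To upgrade the equality from $m$-a.e.\ to q.e., I would invoke the fact that, by definition, both $u_1$ and $u_2$ are Borel measurable and quasi-continuous on $E$. A standard fact from the theory of (quasi-)regular Dirichlet forms (cf.\ \cite[Lemma 2.1.4]{FOT}) asserts that two quasi-continuous functions which coincide $m$-a.e.\ on $E$ must coincide q.e.\ on $E$; applying this to $u_1$ and $u_2$ yields $u_1=u_2$ q.e.

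The proof is therefore essentially bookkeeping: the real content was already accumulated in Section 4 (existence and uniqueness of duality solutions) and in part (ii) of Theorem \ref{th5.ct.1} (every strong duality solution is a duality solution). I do not anticipate any obstacle, since the only subtle point - passing from $m$-a.e.\ to q.e.\ equality - is handled by the quasi-continuity requirement built into the definition of strong duality solution.
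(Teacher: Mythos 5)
Your proposal matches the paper's one-line proof exactly: uniqueness follows by combining Theorem \ref{th5.ct.1}(ii) (every strong duality solution is a duality solution) with Theorem \ref{th4.s12} (duality solutions are unique). The extra remark upgrading $m$-a.e.\ equality to q.e.\ equality via quasi-continuity is a sound refinement, though the paper does not spell it out.
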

\begin{proof}
Follows from Theorem \ref{th5.ct.1}(ii) and Theorem \ref{th4.s12}.
\end{proof}

\begin{theorem}
\label{th.main.ex.th23}
Let $\nu$ be a positive smooth measure on $E$ and $\mu$ be a
bounded Borel measure on $E$. Then there exists a strong duality
solution to \mbox{\rm(\ref{eq1.1})} if and only if
$|\mu_c|(N_\nu)=0$.
\end{theorem}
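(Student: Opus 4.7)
The plan is to prove both implications by combining Theorem \ref{th5.ct.1} with the fact that $\mu_d$ cannot charge polar sets and that $N_\nu$ is polar.

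For sufficiency, I would assume $|\mu_c|(N_\nu)=0$. Since $\mu_d$ is absolutely continuous with respect to $\mbox{Cap}$ and $N_\nu$ is polar (as the exceptional set of the PCAF $A^\nu$; cf.\ Theorem \ref{prop4.1}), automatically $|\mu_d|(N_\nu)=0$, so $|\mu|(N_\nu)=0$ and $\mu=\mu_{\lfloor E_\nu}$. Then I would take the duality solution $u$ furnished by Theorem \ref{th4.s12}, pass to its finely continuous (hence quasi-continuous) $m$-version $\check u$ from Theorem \ref{th4.3.dsu}(i), and apply Theorem \ref{th5.ct.1}(i), which immediately shows $\check u$ to be a strong duality solution to the equation with right-hand side $\mu_{\lfloor E_\nu}=\mu$.

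For necessity, suppose $u$ is a strong duality solution. By Theorem \ref{th5.ct.1}(ii) $u$ is also a duality solution, and then Theorem \ref{th5.ct.1}(i) makes its quasi-continuous $m$-version $\tilde u$ a strong duality solution to $-Aw+w\cdot\nu=\mu_{\lfloor E_\nu}$. Strong duality solutions are quasi-continuous by definition, so $u=\tilde u$ q.e., and therefore $m$-a.e.\ (sets of $\mbox{Cap}$ zero are $m$-null by transience of $(\EE,D(\EE))$). Writing (\ref{eq4.2s4}) for $u$ and for $\tilde u$ and subtracting, the absorption terms $R(u\cdot\nu)$ and $R(\tilde u\cdot\nu)$ coincide (because $\nu$ charges no polar set, so $u\cdot\nu=\tilde u\cdot\nu$), leaving
\[
R\mu_{\lfloor N_\nu}(x)=0\quad\text{for $m$-a.e.\ }x\in E.
\]
Since $\mu_d$ also charges no polar set, $\mu_{\lfloor N_\nu}=(\mu_c)_{\lfloor N_\nu}$, and the Hahn decomposition $(\mu_c)_{\lfloor N_\nu}=\sigma-\tau$ produces two mutually singular positive bounded measures with equal $R$-potentials.

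The hard part will be concluding that $R\sigma=R\tau$ forces $\sigma=\tau$ for these measures concentrated on the polar set $N_\nu$. To handle it I would first extend the equality $R\sigma=R\tau$ from $m$-a.e.\ to everywhere (both sides are excessive), then pair it with the strictly positive Borel function $\eta$ of Remark \ref{rem4.sr} having $R\eta$ bounded; Fubini and symmetry of $G$ give $\int R\eta\,d\sigma=\int R\eta\,d\tau$, and running this over a separating family of such $\eta$'s---or, equivalently, invoking uniqueness of the Riesz measure associated with a pure potential under the absolute continuity hypothesis on $\mathbb X$---yields $\sigma=\tau$. Mutual singularity then gives $\sigma=\tau=0$, whence $|\mu_c|(N_\nu)=0$.
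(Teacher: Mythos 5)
Your proof is correct and follows essentially the same route as the paper. Sufficiency is identical: $|\mu_c|(N_\nu)=0$ together with polarity of $N_\nu$ gives $\mu=\mu_{\lfloor E_\nu}$, and then Theorem \ref{th4.s12} and Theorem \ref{th5.ct.1}(i) produce the strong duality solution. For necessity, both you and the paper apply Theorem \ref{th5.ct.1}(ii) and then (i) to conclude that $u$ solves the equation simultaneously with data $\mu$ and with data $\mu_{\lfloor E_\nu}$; you subtract the two instances of the pointwise identity (\ref{eq4.2s4}), while the paper subtracts the two instances of the weak pairing from Remark \ref{rem.5.321}, but these are the same computation up to a Fubini, and both reduce matters to showing that $R\mu_{\lfloor N_\nu}=0$ (q.e., hence everywhere by excessivity) forces $\mu_{\lfloor N_\nu}=0$. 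The paper simply asserts this last implication; you correctly isolate it as the crux and resolve it via Hahn decomposition plus injectivity of the potential operator. One small caution on your last step: ``running over a separating family of such $\eta$'s'' is not really an independent alternative to Riesz uniqueness --- to know that $\{R\eta:\ R\eta\ \text{bounded}\}$ separates bounded measures is exactly the injectivity of $R$, so the two phrasings are circular with respect to each other. The clean way to close it, consistent with what you cite, is the uniqueness of the charge of a potential for a transient Hunt process under the absolute continuity condition (equivalently, upgrade $R\sigma=R\tau$ to $R_\alpha\sigma=R_\alpha\tau$ via the resolvent equation, pair against $f\in C_c(E)$, and let $\alpha\to\infty$ using $\alpha R_\alpha f\to f$ pointwise); mutual singularity of $\sigma$ and $\tau$ then gives $\sigma=\tau=0$ as you say.
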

\begin{proof}
Assume that $|\mu_c|(N_\nu)=0$. Then, since $N_\nu$ is polar,
$\mu=\mu_{\lfloor E_\nu}$. Therefore, by Theorem \ref{th4.s12} and
Theorem \ref{th5.ct.1}(i), there exists a solution to
(\ref{eq1.1}).

Now assume that that there exists a strong duality  solution $u$
to (\ref{eq1.1}). Then, by  Theorem \ref{th5.ct.1}(ii), $u$ is a
duality solution to (\ref{eq1.1}). Consequently,  by Theorem
\ref{th5.ct.1}(i), $u$ is a strong duality solution to
\begin{equation}
\label{eq4.9.52}
-Au+u\cdot\nu=\mu_{|E_\nu}.
\end{equation}
Therefore $u$ is a strong duality solution to (\ref{eq1.1}), and
at the same time, a strong duality solution to  (\ref{eq4.9.52}).
By the definition of a strong duality solution and Remark
\ref{rem.5.321} we have $\langle\mu_{|E_\nu},R\beta\rangle=\langle
\mu,R\beta\rangle $ for every smooth measure $\beta$ such that
$R|\beta|$ is bounded. This implies that $\mu_{|E_\nu}=\mu$, so
$|\mu_c|(N_\nu)=0$.
\end{proof}

\section{Extension of the class $\MM_1$ and renormalized solutions}
\label{sec6}

In this section we show that the methods of proofs of existence results for (\ref{eq1.1}) achieved in the previous sections
in fact apply to the broader class of measures $\mu$ on the right-hand side of \eqref{eq1.1}. Denote by $\mathscr W$ the set
of  strictly positive, $m$-a.e. finite excessive functions on $E$. For $\rho\in\mathscr W$ we let  $\MM_\rho$ denote the set of Borel measures $\mu$ on $E$ such that 
\[
\int_E\rho(x)\,|\mu|(dx)<\infty,
\]
Taking $\rho\equiv 1$ we get $\MM_1$. 
Recall that if the Green function $G$ for $-A$ is strictly positive, then any function of the form $\rho=R\beta$
for non-trivial positive measure $\beta$ belongs to $\mathscr W$. In particular, if $\varphi$ is a strictly  positive principle eigenfunction
for $-A$,  then $\varphi=\lambda_1^{-1}R\varphi$ belongs to $\mathscr W$. 
Another  very important in applications class of functions included in $\mathscr W$ is the class of strictly positive harmonic functions.

\begin{lemma}
\label{lm.exospfmef13}
Let $\rho\in\mathscr W$. Then there exists a strictly positive function $g$
such that $Rg\le \rho$.
\end{lemma}
\begin{proof}
Clearly, without loss of generality we may assume that $\rho$ is bounded.
Let $\{U_n\}$ be an increasing sequence of relatively compact open subsets of $E$ such that $\bigcup_{n\ge 1} U_n=E$.
By \cite[Theorem 1.2.5, Lemma 2.3.5]{CF} for any $n\ge 1$ there exists a positive smooth measure $\nu_n$ such that $R\nu_n\le \rho$, and $R\nu_n=\rho$ q.e. on $U_n$.
Set $g_n:= R_1\nu_n$. Since $R\nu_n=\rho$ q.e. on $U_n$, we have that $R\nu_n>0$ q.e. on $U_n$, and hence $R_1\nu_n>0$ q.e. on $U_n$.
Thus, for any $n\ge 1$,  $g_n>0$ q.e. on $U_n$. Moreover,
\[
Rg_n=RR_1\nu_n=R_1R\nu_n\le R_1\rho\le \rho.
\]
Set $g=\sum_{n=1}^\infty 2^{-n}g_n$. Then $g$ is the desired function.
\end{proof}

\begin{remark}
\label{remark.com46}
For $\mu\in\MM_\rho$, we define duality solutions to (\ref{eq1.1}) as in Definition \ref{eq4.s51}
but with the class of test functions consisting of $\eta\in \BB(E)$ such that 
\begin{equation}
\label{eq.tfcbm4817}
R|\eta|\le c \rho,\quad \mbox{for some}\quad c>0.
\end{equation}
The only change in the definition of  strong duality solution (Definition \ref{def.sds3054}) to (\ref{eq1.1}) with  $\mu\in\MM_\rho$ is that
we require from $u$ to be in $L^1(E;\rho\cdot \nu)$.
The integrals in  (\ref{eq4.2s4}) are well defined and finite for q.e. $x\in E$.
Indeed, by Lemma \ref{lm.exospfmef13} there exists a strictly positive function $g$ such that $Rg\le\rho$.
Thus,
\[
\langle g,R|\mu|\rangle\le \langle \rho,|\mu|\rangle<\infty.
\]
Therefore, since $g$ is strictly positive, $R|\mu|<\infty,\,m$-a.e.
Since $R|\mu|$ is an excessive function, $R|\mu|<\infty$ q.e.
The same reasoning applies to $u\cdot\nu$.
Clearly, for $\mu\in\MM_\rho$, \eqref{eq5.3} holds for smooth $\beta$ such that 
\begin{equation}
\label{eq.tfcbm48173}
R|\beta|\le c \rho,\quad \mbox{for some}\quad c>0.
\end{equation}
Repeating step by step the proof of Theorem \ref{th4.s12}, Theorem \ref{th4.3.dsu}(i)-(ii), Theorem \ref{th5.ct.1}, and  Theorem \ref{th.main.ex.th23}
but using test functions/measures  satisfying (\ref{eq.tfcbm4817}), (\ref{eq.tfcbm48173}) we get assertions of these theorems for $\mu\in\MM_\rho$
(clearly, in Theorem \ref{th4.3.dsu}(ii) with $L^1(E;\nu)$ replaced by $L^1(E;\rho\cdot\nu)$).
\end{remark}

\begin{example}
To show an application of the Remark \ref{remark.com46} consider for a bounded open  $D\subset\BR^d$, $d\ge 2$, and $\alpha\in (0,2)$
\begin{equation}
\label{eq.tfcbm48174op}
A=(\Delta^{\alpha/2})_{|D},
\end{equation}
i.e. Dirichlet fractional Laplacian with zero exterior condition. Taking $\rho=\varphi_1^D$,
%Taking $\rho=R1$, we have (see e.g. \cite{Grzywny})
%that
%\begin{equation}
%\label{eq.tfcbm48174ad}
%c^{-1}\varphi^D_1\le \rho\le c \varphi_1^D
%\end{equation}
%for some $c>0$, 
where $\varphi_1^D$ is the principal eigenfunction of $(\Delta^{\alpha/2})_{|D}$,  we get the existence result for (\ref{eq1.1}) with $\mu$ in the class
\[
\MM_{\varphi_1^D}:=\{\mu\quad\mbox{is a Borel measure and}\quad \int_D\varphi_1^D\,d|\mu|<\infty\}.
\]
Furthermore, if $D$ is a $C^{1,1}$ domain, then it is well known (see e.g. \cite{Kulczycki}) that
\begin{equation}
\label{eq.tfcbm48174}
c^{-1}\delta_D^{\alpha/2}\le \varphi^D_1\le c \delta_D^{\alpha/2}
\end{equation}
for some $c>0$. Thus, we cover the class of measures considered in \cite{DGV}.  
It is worth mentioning  that $(\ref{eq.tfcbm48174})$ does not hold for arbitrary open domain $D$.
On the other hand, since $(P_t)_{t\ge 0}$ is {\em intrinsically ultracontractive} (see e.g. \cite{Grzywny}),
we have that
\begin{equation}
\label{eq.tfcbm48174gfi}
G(x,y)\ge c\varphi_1^D(x)\varphi_1^D(y),\quad x,y\in D.
\end{equation}
Therefore, we see that if $u$ is a solution to (\ref{eq1.1}) with $\nu\equiv 0$, and positive $\mu$, then
\[
u(x)=R\mu(x)\ge c\varphi_1^D(x)\int_D\varphi_1^D(y)\,\mu(dy),\quad\mbox{q.e.}
\]
So, the class $\MM_{\varphi_1^D}$ is optimal for problems of type (\ref{eq1.1})
with $A$ given by \eqref{eq.tfcbm48174op} and $\nu\equiv 0$ (cf. \cite[Proposition 3.10]{DGV}). 
In fact, by Theorem \ref{th4.s12} and \eqref{eq.tfcbm48174gfi}, the class $\MM_{\varphi_1^D}$ is optimal for \eqref{eq1.1}  with $A$ given by \eqref{eq.tfcbm48174op}, and $\nu$ satisfying
\[
c^{-1}G^\nu\le G\le c G^\nu,
\]
see \cite{Hansen} for the sufficient conditions on $\nu$ guaranteeing  the above comparability of $G$ and $G^\nu$.

\end{example}

We close this section with some comments on the notion of
renormalized solutions. For semilinear equations with
Dirichlet operators and general measure data this  notion was
introduced  in \cite{KR:MM}.   However, the concept of
renormalized solutions goes back to the paper by Dal Maso,  Murat,
Orsina and  Prignet \cite{DMOP}, where equations with local
nonlinear Leray-Lions type operators are considered.   In
\cite{KR:NoDEA} we observed that one of the equivalent formulation
of a renormalized solution to local equation with measure data
considered in  \cite{DMOP}  is also suitable for equations with
non-local operators and smooth measure data in the sense that it ensures uniqueness. In \cite{KR:MM} we generalized this result to non-local equations with general measure data.

The definition adopted in  \cite{KR:MM} reads as follows.
\begin{definition}
We say that a quasi-continuous function $u$  is a renormalized solution to (\ref{eq1.1}) if
\begin{enumerate}[(a)]
\item $u\in L^1(E;\nu)$, and $T_k(u)\in D_e(\EE)$, $k\ge 0$,
\item  There exists a sequence $\{\beta_k\}$ of bounded smooth measures such that for any bounded $\eta\in D_e(\EE)$ and any $k\ge 0$, 
\[
\EE(T_k(u),\eta)+\langle u\cdot\nu,\tilde \eta\rangle =\langle \mu_d,\tilde \eta\rangle+\langle\beta_k,\tilde\eta\rangle,
\]
\item $R\beta_k\rightarrow R\mu_c$ q.e. as $k\rightarrow \infty$.
\end{enumerate}
\end{definition}

By \cite{K:NoDEA}, under an additional assumption on the operator $A$, e.g. $R_1(\BB_b^+(E))\subset C_b(E)$,
the above definition is equivalent to the following one: $u$ is a quasi-continuous function satisfying (a), (b), and
\begin{enumerate}
\item[(c')] $\beta_k\rightarrow \mu_c$ narrowly as $k\rightarrow \infty$.
\end{enumerate}

\begin{proposition}
A quasi-continuous function  $u$ on $E$ is a strong duality solution to \mbox{\rm(\ref{eq1.1})} if and only if it is a renormalized solution to \mbox{\rm(\ref{eq1.1})}.
\end{proposition}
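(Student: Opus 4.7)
The plan is to reduce the equivalence to the known correspondence between strong and renormalized solutions for equations \emph{without} an absorption term, established in the earlier works \cite{KR:MM, KR:NoDEA} cited in the preceding discussion. The bridge is the simple observation that if $u$ is quasi-continuous and $u\in L^1(E;\nu)$, then $u\cdot\nu$ is a bounded signed smooth measure. In particular $u\cdot\nu$ is absolutely continuous with respect to $\mbox{Cap}$, so it has trivial concentrated part. Therefore, if we set
\[
\lambda := \mu - u\cdot\nu,
\]
then $\lambda$ is a bounded Borel measure with Fukushima decomposition
\[
\lambda_d = \mu_d - u\cdot\nu,\qquad \lambda_c = \mu_c.
\]

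First I would show the direction (strong duality $\Rightarrow$ renormalized). Assuming $u$ is a strong duality solution to (\ref{eq1.1}), Remark \ref{rem.5.321} gives, for every positive Borel $\beta$ with $R|\beta|$ bounded,
\[
\langle u,\beta\rangle = \langle \lambda, R\beta\rangle,
\]
i.e.\ $u = R\lambda$ $m$-a.e. By the equivalence ``strong solution $\Leftrightarrow$ renormalized solution'' for the equation $-Aw = \lambda$ proved in \cite{KR:MM} (with the definition of renormalized solution recalled here, specialised to $\nu=0$), there exists a sequence $\{\beta_k\}$ of bounded smooth measures with $R\beta_k\to R\lambda_c$ q.e.\ such that $T_k(u)\in D_e(\EE)$ and
\[
\EE(T_k(u),\eta) = \langle\lambda_d,\tilde\eta\rangle + \langle\beta_k,\tilde\eta\rangle
\]
for all bounded $\eta\in D_e(\EE)$ and all $k\ge 0$. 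Substituting $\lambda_d = \mu_d - u\cdot\nu$ and $\lambda_c = \mu_c$ rearranges this into exactly condition (a) of a renormalized solution of (\ref{eq1.1}), while $R\beta_k\to R\mu_c$ q.e.\ gives (b).

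For the converse, suppose $u$ is renormalized for (\ref{eq1.1}). Rewriting condition (a) as
\[
\EE(T_k(u),\eta) = \langle \mu_d - u\cdot\nu,\tilde\eta\rangle + \langle\beta_k,\tilde\eta\rangle = \langle\lambda_d,\tilde\eta\rangle + \langle\beta_k,\tilde\eta\rangle,
\]
together with $R\beta_k\to R\mu_c = R\lambda_c$ q.e., shows that $u$ is a renormalized solution to $-Aw = \lambda$. Invoking the same equivalence in the opposite direction, $u = R\lambda$ $m$-a.e., which by (\ref{eq4.2s4}) is precisely the strong duality condition for (\ref{eq1.1}).

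The main technical point to verify carefully is the rigorous check that $u\cdot\nu$ enters the framework of \cite{KR:MM}: its boundedness follows from $u\in L^1(E;\nu)$, its smoothness from the smoothness of $\nu$ together with the quasi-continuous (hence Borel, finely continuous up to a polar set) representative of $u$, and the orthogonality of its concentrated part to $\mbox{Cap}$ from the absolute continuity of $\nu$ w.r.t.\ $\mbox{Cap}$. Once these are in place, the only real ingredient is the already-established equivalence of strong and renormalized solutions for $-Aw=\lambda$ in the general Dirichlet-operator setting; without that black box one would have to redo the whole renormalization machinery here, which is the genuinely delicate part of the theory.
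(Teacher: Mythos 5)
Your proof is correct and takes the same route as the paper: the paper's one-line proof likewise invokes \cite[Theorem 4.4]{KR:MM} applied to the reformulation (\ref{eq1.3}), i.e.\ to the linear equation $-Au=\lambda$ with bounded measure data $\lambda=-u\cdot\nu+\mu$, relying implicitly on the same observation that $u\cdot\nu$ is a bounded diffuse (smooth) measure so that $\lambda_d=\mu_d-u\cdot\nu$ and $\lambda_c=\mu_c$. Your write-up simply makes explicit the bookkeeping of the Fukushima decomposition that the paper leaves to the reader.
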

\begin{proof}
Follows from \cite[Theorem 4.4]{KR:MM} applied to (\ref{eq1.3}), i.e. to the linear equation with bounded measure $-u\cdot\nu+\mu$ on the right-hand side.
\end{proof}

\subsection*{Acknowledgements}
{\small This work was supported by Polish National Science Centre
(Grant No. 2017/25/B/ST1/00878).}

\end{document}